\def\theequation{\@arabic\c@equation}
\numberwithin{equation}{section}
\newcommand{\re}{\operatorname{Re}}
\newcommand{\Ran}{\operatorname{ran}}
\renewcommand{\ker}{\operatorname{ker}}
\newcommand{\diag}{\operatorname{diag}}
\newcommand{\Span}{\operatorname{span}}
\newcommand{\RR}{{\mathbb{R}}}
\newcommand{\cL}{{\mathcal{L}}}
\newcommand{\cE}{{\mathcal{E}}}
\newcommand{\cB}{{\mathcal{B}}}
\newcommand{\cD}{{\mathcal{D}}}
\newcommand{\cM}{{\mathcal{M}}}
\newcommand{\D}{\mathrm{d}}
\newcommand{\dd}{\,\mathrm{d}}
\newcommand{\e}{\mathrm{e}}
\newcommand{\Sp}{\operatorname{Sp}}
\newcommand{\Spe}{\operatorname{Sp_{ess}}}
\newtheorem{theorem}{Theorem}[section]
\newtheorem{lemma}[theorem]{Lemma}
\newtheorem{hypothesis}[theorem]{Hypothesis}
\theoremstyle{remark}
\newtheorem{remark}[theorem]{Remark}
\newtheorem{example}[theorem]{Example}
\begin{document}

\title[Stable foliation]{Stable foliations near a traveling front for reaction diffusion systems} 

\date{\today}
\author{Yuri Latushkin, Roland Schnaubelt and Xinyao Yang}

\address{YL, Department of Mathematics,
University of Missouri, Columbia, MO 65211, USA}
\email{latushkiny@missouri.edu}
\urladdr{http://www.math.missouri.edu/personnel/faculty/latushkiny.html}

\address{RS, Department of Mathematics, Karlsruhe Institute of Technology,
76128 Karlsruhe, Germany}
\email{schnaubelt@kit.edu}
\urladdr{http://www.math.kit.edu/iana3/~schnaubelt/}

\address{XY, Department of Mathematics,
University of Missouri, Columbia, MO 65211, USA}
\email{xywp8@mail.mizzou.edu}

\thanks{Partially supported by the US National Science
Foundation under Grants NSF DMS-1067929, DMS-1311313, by the Research Board and Research Council 
of the University of Missouri, and by the Simons Foundation. This research was funded by the
IRSES program of the European Commission (PIRSES-GA-2012-318910). RS gratefully acknowledges financial support 
by the Deutsche Forschungsgemeinschaft (DFG) through CRC 1173.}

\begin{abstract}
We establish the existence of a stable foliation in the vicinity of a traveling front solution for systems 
of reaction diffusion equations in one space dimension that arise in the study of chemical reactions models and 
solid fuel combustion. In this way we complement the orbital stability results from earlier papers
by A.~Ghazaryan, S.~Schecter and Y.~Latushkin. The essential spectrum of the differential operator obtained 
by linearization at the front touches the imaginary axis. In spaces with exponential weights, one can shift 
the spectrum to the left. We study the nonlinear equation on the intersection of the unweighted and weighted 
spaces. Small translations of the front form a center unstable manifold. For each small translation we prove 
the existence of a stable manifold containing the translated front and show that the stable manifolds foliate 
a small ball centered at the front.
\end{abstract}

\maketitle
\section{Introduction}
Traveling fronts are solutions to partial differential equations which move with constant speed without 
changing their shapes and which are asymptotic to spatially constant steady states. Traveling fronts 
are important by many reasons and have intensively been studied. We refer to the books and review papers 
\cite{F,VVV,X} and to more recent sources such as \cite{KP,LW,RM1,RM2,RM3,Sa02,TZKS}
that contain further bibliography. 

In this paper we study the dynamics in the vicinity of traveling fronts for a class  of reaction 
diffusion equations in one space dimension. A typical example 
arising in combustion theory for solid fuels, cf.\ \cite{BLR,GLS1,MS}, is given by
\begin{equation}\label{eq:exa0}
u_t=u_{xx}+ug(v),\quad
v_t=\epsilon v_{xx}+\kappa ug(v),
\end{equation}
where $u,v\in\mathbb{R}$, $\epsilon\geq 0$,  $\kappa\in\mathbb{R}$,
and  $g(v)=\e^{-1/v}$ for $v>0$ and $g(v)=0$ for $v\leq 0$.
These and more general equations covered by our hypotheses often appear in the work on chemical 
reaction models and in combustion models, see, e.g., \cite{GSM,SMS,SKMS2,T,VV}. In such systems
the spectrum of  the linearization of the equation at the front touches the imaginary axis, 
cf.\ \cite{Sa02,SS}. To shift the spectrum to the left, one employs exponentially weighted spaces.
This idea  goes back to \cite{S} and \cite{PW}. However, in weighted spaces one can lose the Lipschitz 
properties of the nonlinearity.  We shall study  reaction terms with a certain "product" structure as 
in \eqref{eq:exa0} which allows one to overcome these difficulties.
The investigation of this class of nonlinearities was initiated by A.~Ghazaryan in 
\cite{G_indiana} and then continued in \cite{GLS,GLS1,GLS2}, see also the review paper \cite{GLS3}. In 
particular, it was proved in \cite{GLS2} that under appropriate assumptions on the nonlinearity the traveling 
front is orbitally stable; that is, any solution originating in a small vicinity of the front converges 
exponentially in the weighted norm to a translation of the front.

In this paper we continue the work in \cite{GLS2} now utilizing the theory of invariant manifolds, cf.\ 
\cite{BJ,CHT,Lu}. We analyze the dynamics in greater detail by proving in Theorem~\ref{STFOLTH}  
the existence of a stable foliation near the front. Specifically, we observe that the set of all translations 
of the front serves as a local central unstable manifold consisting of fixed points. Next, using the 
Lyapunov-Perron method, cf.\ e.g.\ \cite{LL,LPS1,LPS2}, we establish the existence and the fundamental 
properties of a locally invariant stable manifold going 
through each translation of the front. We also show that these manifolds foliate a small neighborhood of the 
front and therefore each point in the neighborhood belongs to one of them, cf.\ \cite{BLZ,CHT}. 
Moreover, the orbit of the point converges to the 
translation of the front along the stable manifold as proved in \cite{GLS2}.

In the construction of the local stable manifolds we have to face the problem that the linearization 
enjoys good decay properties only in weighted spaces on which the nonlinearity is  not locally Lipschitz.
To overcome this difficulty, we use both the product structure of the nonlinearity (cf.\ Hypothesis~\ref{HRFR})
and additional decay properties of the linearization at the limit of the traveling front as $\xi\to -\infty$,
see Lemmas~\ref{mainest}  and \ref{Lproper}.

The paper is organized as follows. In Section~\ref{sec1} we formulate our assumptions and prove several
preliminary results. In Section~\ref{sec2} we study the Lyapunov-Perron operator whose fixed points define 
 the stable manifolds. In Section~\ref{sec3} we formulate and prove our main result on the 
existence of the stable manifolds and discuss two examples.

{\bf Notation.}\, Throughout the paper,  $\left|\cdot\right|$ and $\left< \cdot,\cdot \right>$ are the 
Euclidean norm and the scalar product in $\mathbb{R}^n$. For a given map $f:\mathbb{R}^m\rightarrow\RR^k$,
its differential with respect to $y$ is written as 
$\partial_y f:\mathbb{R}^m\to \mathcal{B}(\mathbb{R}^m,\mathbb{R}^k)$. 
We let $\mathcal{B}(\mathcal{E},\mathcal{F})$ be the set of linear bounded operators between Banach spaces 
$\mathcal{E}$ and $\mathcal{F}$, and abbreviate $\mathcal{B}(\cE)=\mathcal{B}(\mathcal{E},\mathcal{E})$.
We denote by $C$ a generic constant that may change from one estimate to another, and use $T$ to designate 
transposition. For a Banach space with norm $\left\|\cdot\right\|$, we write 
$\mathbb{B}_{\delta}(\left\|\cdot\right\|)$ for the closed ball of radius $\delta$ centered at $0$.

We denote by $\mathcal{E}_0$ with norm $\left|\cdot \right|_0$ either the Sobolev space $H^1$ or the space 
$BUC$ of bounded uniformly continuous functions on $\mathbb{R}$ with vector values, and by
$\mathcal{E}_{\alpha}$ with norm $\left|\cdot\right|_{\alpha}$
the respective space of (exponentially) weighted functions, see \eqref{alphaomega}. 
Let $\left|\cdot\right|_{\beta}$ be the norm on the intersection space 
 $\mathcal{E}_{\beta}:=\mathcal{E}_0\cap\mathcal{E}_{\alpha}$; i.e., 
  $\left|y\right|_{\beta}:=\max\{ |y|_0,|y|_{\alpha} \}$. 

\section{The setting}\label{sec1}
We consider the system of reaction diffusion equations
\begin{equation}\label{eq4.2.1}
Y_t=DY_{xx}+R(Y),\qquad x\in\mathbb{R},\ t\geq0,
\end{equation}
where $D=\diag(d_1,\dots,d_n)$,  $d_j\geq 0$, $Y(t,x)\in \RR^n$, and $R: \mathbb{R}^n\rightarrow \mathbb{R}^n$ 
is a $C^3$ function satisfying  additional properties listed below. 

Passing in \eqref{eq4.2.1} to the moving coordinate frame $\xi=x-ct$ and redenoting $\xi$ again by $x$, 
we arrive at the nonlinear equation
\begin{equation}\label{eq4.2.3}
Y_t=DY_{xx}+cY_{x}+R(Y),\qquad  x\in\mathbb{R}, \ t\geq 0.
\end{equation}
We discuss the wellposedness of this system in Remark~\ref{DL}.

\begin{hypothesis}\label{HRF}
We assume that for some velocity $c\in\RR$ the system \eqref{eq4.2.3} admits a stationary solution 
$Y_0\in C^3(\RR)$; i.e, \eqref{eq4.2.1} possesses the traveling front solution $Y(t,x)=Y_0(x-ct)$. 
It is also required  that $Y_0(x)$ converges to the end states $Y_\pm$ as $x\rightarrow\pm\infty$
 exponentially; i.e.,
\begin{equation}\label{eq4.2.2}
\begin{aligned}
\left| Y_0(x)-Y_{-} \right|\leq C\e^{-\omega_-x},\qquad x\leq 0,\\
\left| Y_0(x)-Y_{+} \right|\leq C\e^{-\omega_+x},\qquad x\geq 0,
\end{aligned}
\end{equation}
for some $\omega_-<0<\omega_+$ and $C>0$. Replacing $R$ by $\tilde{R}(Y):=R(Y+Y_-)$, we can and will
assume that $Y_-=0$ (and we then drop the tilde).
\end{hypothesis}
 
We further assume that the nonlinear term $R$ in \eqref{eq4.2.1} and \eqref{eq4.2.3} has the following 
product structure.
 \begin{hypothesis}\label{HRFR}
The nonlinear term $R$ belongs to $C^3(\RR^n,\RR^n)$. In appropriate variables $Y=(U,V)^T$ with 
$U\in\mathbb{R}^{n_1}$, $V\in\mathbb{R}^{n_2}$ and $n_1+n_2=n$, we have 
\begin{equation}\label{eq4.3.1}
R(U,0)=(A_1U,0)
\end{equation}
for a constant $n_1\times n_1$ matrix $A_1$.
\end{hypothesis} 
In other words, we suppose that 
\begin{equation*}
R(U,V) =\begin{pmatrix} A_1U+ R_1(U,V)\\ R_2(U,V) \end{pmatrix},
\end{equation*}
where the maps $R_j$ belong to $C^2(\RR^n, \RR^{n_j})$ and satisfy $R_j(U,0)=0$ for $j\in\{1,2\}$ and 
$U\in \RR^{n_1}$. Note that condition \eqref{eq4.3.1} yields $R(0,0)=R(Y_-)=0$. We also split
\[D=\begin{pmatrix}
D_1 & 0\\0 & D_2
\end{pmatrix}, \qquad 
\text{where} \quad D_1=\diag(d_1,\dots,d_{n_1}), \quad D_2=\diag(d_{n_1+1},\dots,d_n).\]

Let $q\in\mathbb{R}$. We write $Y_q(x)=Y_0(x-q)$ for the shifted wave. Since \eqref{eq4.2.3} is 
translationally invariant, $Y_q$ is again a steady state solution of \eqref{eq4.2.3} and thus yields
a traveling wave solution for \eqref{eq4.2.1}. Linearizing \eqref{eq4.2.3} at $Y_q$ (that is, substituting 
$Y_q+Y$ instead of $Y$ in \eqref{eq4.2.3}), we arrive at the equation
\begin{equation}\label{eq4.3.3}
Y_t=L_qY+F_q(Y),\qquad \text{where} \quad
L_qY=DY_{xx}+cY_x+\partial_YR(Y_q)Y.
\end{equation}
Here,  $\partial_Y$ is the differential with respect to $Y\in\RR^n$ 
and the nonlinear term $F_q:\mathbb{R}^n\rightarrow \mathbb{R}^n$ is written as
\begin{equation}\label{eq4.3.4}
F_q(Y)=\int_{0}^{1}\left(\partial_YR(Y_q+tY)-\partial_YR(Y_q)\right)Y\dd t.
\end{equation}
The linearization of \eqref{eq4.2.3} at $Y_-=(0,0)^T$ is given by
\begin{equation}\label{eq4.3.5}
Y_t=L^-Y+G(Y),\qquad \text{where}\quad
L^-Y=DY_{xx}+cY_x+\partial_{Y}R(0)Y
\end{equation}
and $G:\mathbb{R}^n\rightarrow\mathbb{R}^n$;  $G(Y)=R(Y)-\partial_YR(0)Y$.
We remark that 
\begin{equation}\label{eq4.3.6}
(L_q-L^-)Y=B_qY\qquad \text{with}\quad
B_q(x)=\partial_{Y}R(Y_q(x))-\partial_YR(0).
\end{equation}
Below we impose conditions on $L_0$ at $q=0$; i.e., on the linearization at the original traveling 
wave $Y_0$. We further consider $L_q$ for $|q|\le q_0$ with some  $q_0>0$, which will be fixed sufficiently
small in the final theorem. The shifted wave $Y_q$ decays as in
 Hypothesis~\ref{HRFR} with the same exponents $\omega_\pm$ and constants $C$ only depending on $q_0$.
Assumption \eqref{eq4.3.1} also yields the formulas
\begin{equation}\label{eq4.4.1}
\partial_YR(0,0)=\begin{pmatrix}
A_1 & \partial_VR_1(0,0)\\ 0 & \partial_VR_2(0,0)
\end{pmatrix},\qquad 
L^-=\begin{pmatrix}
L^{(1)} & \partial_VR_1(0,0)\\
0 & L^{(2)}
\end{pmatrix}
\end{equation}
with the differential expressions
\begin{equation}\label{eq4.4.2}
\begin{aligned}
& L^{(1)}U=D_1U_{xx}+cU_x+A_1U,\\
& L^{(2)}V=D_2V_{xx}+cV_{x}+\partial_VR_2(0,0)V.
\end{aligned}
\end{equation}

\begin{remark}\label{DL}
We consider the equations \eqref{eq4.2.3} and \eqref{eq4.3.3} on the  space $\mathcal{E}_0$
which is either the Sobolev space $H^1(\mathbb{R})^n$ or the space of bounded uniformly continuous 
functions $BUC(\mathbb{R})^n$. It is straightforward to check that the nonlinearites $R$ and $F_q$ are 
Lipschitz on bounded subsets of $\cE_0$.

For the differential expressions $L_q$ and $L^-$ defined in \eqref{eq4.3.3} 
and \eqref{eq4.3.5}, respectively, 
we denote by $\mathcal{L}_{q}$ and $\mathcal{L}^{-}$ the differential operators on $\mathcal{E}_0$ on their 
natural domain $\cD$ defined as follows. For $\mathcal{E}_0=H^1(\mathbb{R})^n$, the domain $\cD$ 
of $\mathcal{L}_{q}$ and of $\mathcal{L}^{-}$ 
consists of the vector functions $Y=(Y_j)_{j=1}^n$ whose components $Y_j$ belong to $H^3(\mathbb{R})$ if 
$d_j>0$ and to $H^2(\mathbb{R})$ if $d_j=0$. For $\mathcal{E}_0=BUC(\mathbb{R})^n$, we choose the domain 
analogously with $H^3(\mathbb{R})$ replaced by $BUC^2(\mathbb{R})$ and $H^2(\mathbb{R})$ replaced by 
$BUC^1(\mathbb{R})$, the spaces of differentiable functions which are bounded and have bounded, 
uniformly continuous derivatives. The operators $\mathcal{L}_{q}$ and  $\mathcal{L}^{-}$ generate 
strongly continuous semigroups $\{T_q(t)\}_{t\ge0}$ and $\{S(t)\}_{t\ge0}$ on $\cE_0$, respectively,  
cf.\ e.g.\  \cite[\S 2.2]{GLS}. 

Standard results then show the local wellposedness of \eqref{eq4.3.3} in $\cE_0$ for initial values $y_0$
in the domain of $\mathcal{L}_{q}$, where the (classical) solutions belong to $C^1([0,t_0),\cE_0)$ and take
values in $\cD$. They are given by Duhamel's formula 
\begin{equation}
\label{VOCF0}
Y(t)=T_q(t)y_0+\int_0^tT_q(t-\tau)F_q(Y(\tau))\dd\tau,\qquad t\geq 0.
\end{equation}
 See e.g.\
Theorems~6.1.4 and 6.1.6 in \cite{pazy}. A function $Y\in C([0,t_0),\cE_0)$ satisfying \eqref{VOCF0}
is called a \emph{mild solution} of  \eqref{eq4.3.3}. This concept is strictly weaker than that of classical 
solvability. We mostly work with mild solutions. Similar remarks apply to \eqref{eq4.2.3} and 
the differential expression $D\partial_{xx} + c\partial_x$ equipped the same domain $\cD$. 
Approximating a given initial value $y_0\in \cE_0$ in $\cE_0$ by functions in $\cD$, we  see that
all mild solutions of \eqref{eq4.2.3} are given by $Y_q+Y(t)$ where $Y(t)$ solves \eqref{VOCF0}.   
\hfill$\Diamond$
\end{remark}

Let $\alpha=(\alpha_-,\alpha_+)\in\mathbb{R}^2$. We say that $\gamma_{\alpha}:\mathbb{R}\rightarrow\mathbb{R}$ 
is a weight function of class $\alpha$ if $\gamma_{\alpha}$ is $C^2$, $\gamma_{\alpha}(x)>0$ for all 
$x\in\mathbb{R}$, and $\gamma_{\alpha}(x)=\e^{\alpha_-x}$ for $x\leq -x_0$ and 
$\gamma_{\alpha}(x)=\e^{\alpha_+x}$ for $x\geq x_0$ for some  $x_0>0$. We shall always assume that 
\begin{equation}\label{alphaomega}
0<\alpha_-<-\omega_-\,\text{ and }\,0\leq \alpha_+<\omega_+,
\end{equation}
where $\omega_{\pm}$ are the exponents mentioned in \eqref{eq4.2.2}. Given such a pair
$\alpha=(\alpha_-,\alpha_+)$, we introduce the weighted space 
$\mathcal{E}_{\alpha}=\{ u:\RR\to\RR^n:\gamma_{\alpha }u\in\mathcal{E}_0 \}$ with the norm 
$|u|_{\alpha}=|\gamma_{\alpha}u|_0$. (Recall that $\mathcal{E}_0$ with norm $|\cdot|_0$ is either 
$H^1(\mathbb{R})^n$ or $BUC(\mathbb{R})^n$.) The intersection space
$\mathcal{E}_{\beta}=\mathcal{E}_{0}\cap\mathcal{E}_{\alpha}$ is endowed with the norm 
$|u|_{\beta}=\max\{ |u|_0,|u|_{\alpha} \}$. The differential expressions $L_q$, $L^-$ etc.\ equipped
with their natural domains define operators in $\mathcal{E}_{\alpha}$ which are denoted by 
$\mathcal{L}_{q,\alpha}$, $\mathcal{L}_{\alpha}^-$ etc.\ (cf.\ Remark~\ref{DL}). 
On the spectrum of $\mathcal{L}_{0,\alpha}$, we impose the following  assumptions.

\begin{hypothesis}\label{HypSpL} In addition to Hypotheses~\ref{HRF} and \ref{HRFR}, we assume that 
there exists $\alpha=(\alpha_-,\alpha_+)\in\mathbb{R}^2$ such that \eqref{alphaomega} with $\omega_{\pm}$ 
from \eqref{eq4.2.2} and the following assertions hold.
\begin{itemize}
\item[(a)] $\sup\{ \re\lambda:\lambda\in \Spe(\mathcal{L}_{0,\alpha})\}<0$ for the differential expression 
            $L_0$ defined in \eqref{eq4.3.3}.
\item[(b)] The only element of $\Sp(\mathcal{L}_{0,\alpha})$ in $\{ \lambda\in\mathbb{C}:\re\lambda\geq 0 \}$  
          is a simple eigenvalue at $\lambda=0$ with $Y_0'$ being the respective eigenfunction.
\end{itemize}
\end{hypothesis}

Here the essential spectrum $\Spe(A)$ of a closed densely defined operator contains all points in the spectrum
$\Sp(A)$ which are not isolated eigenvalues of finite algebraic multiplicity.
We discuss various consequences of the above  hypothesis which are important for our proofs. 

\begin{remark}\label{L2H1}
We claim that assertions (a) and (b) in Hypothesis \ref{HypSpL} are satisfied for 
$\mathcal{E}_0=H^1(\RR)^n$ or $\mathcal{E}_0=BUC(\RR)^n$ if and only if they hold when $\mathcal{E}_0$ 
is replaced by the space $L_2(\RR)^n$ and $\mathcal{E}_\alpha$ by the space $L^2_\alpha(\RR)^n$ 
of functions $u$ with $\gamma_\alpha u\in L^2(\RR)$ which is endowed with the norm 
$|u|_\alpha=|\gamma_\alpha u|_{L^2}$. 

Indeed, the ``if'' part of the claim above is proved in Lemma~3.8 of \cite{GLS2}. So we assume 
Hypothesis~\ref{HypSpL} for $\mathcal{E}_0=H^1(\RR)^n$ or $\mathcal{E}_0=BUC(\RR)^n$. Then assertion (a) 
of this hypothesis for $\mathcal{E}_0=L^2(\RR)^n$  is true since the right-hand boundary of the 
essential spectra of $\mathcal{L}_{0,\alpha}$ is the same for all three spaces by \cite[Lemma 3.5]{GLS2}. 
To show assertion (b) for $\mathcal{E}_0=L^2(\RR)^n$, we assume that $\mathcal{L}_{0,\alpha}$ on 
$L^2_\alpha(\RR)^n$ has an isolated eigenvalue $\lambda$ of finite algebraic multiplicity with 
$\re\lambda\ge0$. By means of the isomorphism $u(\cdot)\mapsto \gamma(\cdot)u(\cdot)$ between 
$L^2_\alpha(\RR)^n$ and $L^2(\RR)^n$ we obtain a differential operator $\hat{\mathcal{L}}$ in $L^2(\RR)^n$ 
which is similar to $\mathcal{L}_{0,\alpha}$ in $L^2_\alpha(\RR)^n$, cf.\ \cite[Eqn.~(3.2)]{GLS2}, 
and hence possesses the unstable isolated eigenvalue $\lambda$, too. Palmer's Dichotomy Theorem in \cite{Palm} 
says that the first order system corresponding to the second order eigenvalue problem for $\hat{\mathcal{L}}$ 
admits exponential dichotomies on $\RR_-$ and $\RR_+$. Arguing  as in the proof of Lemma~3.8
of \cite{GLS2}, we see that the respective eigenfunction $Z$ decays exponentially as $x\to\pm\infty$.
It thus belongs to $BUC(\RR)^n$, and also to $H^1(\RR)^n$ since $Z_x$ can be bounded by $Z$ itself due to
the eigenvalue equation, see (3.3) in \cite{GLS2}. As a result,  $\hat{\mathcal{L}}$ in
$H^1(\RR)^n$ or $BUC(\RR)^n$ has the unstable eigenvalue $\lambda$ and therefore also $\mathcal{L}_{0,\alpha}$ 
 in $\mathcal{E}_\alpha$. Hypothesis~\ref{HypSpL} now shows that $\lambda=0$, completing the proof of 
 the claim. \hfill$\Diamond$\end{remark}

\begin{lemma}\label{SPq}
Assume that Hypothesis~\ref{HypSpL} holds. Then assertions (a) and (b) in Hypothesis~\ref{HypSpL} are 
satisfied by the operator $\mathcal{L}_{q,\alpha}$ instead of $\mathcal{L}_{0,\alpha}$  and  by the 
function $Y_q'$ instead of $Y_0'$.
\end{lemma}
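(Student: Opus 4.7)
The plan is to reduce the claim for $\mathcal{L}_{q,\alpha}$ to Hypothesis~\ref{HypSpL} for $\mathcal{L}_{0,\alpha}$ via a similarity transformation implemented by spatial translation. Because $Y_q = \tau_q Y_0$ with $(\tau_a f)(x) := f(x-a)$, the potential $\partial_Y R(Y_q(x)) = \partial_Y R(Y_0(x-q))$ is a translate of the one defining $L_0$, and the constant coefficient parts $D\partial_{xx} + c\partial_x$ commute with $\tau_q$. Concretely, for $Y$ in the natural domain and $Z := \tau_{-q} Y$, a direct computation gives
\begin{equation*}
(L_q Y)(x) = DZ''(x-q) + cZ'(x-q) + \partial_Y R(Y_0(x-q)) Z(x-q) = (\tau_q L_0 \tau_{-q} Y)(x),
\end{equation*}
so $L_q = \tau_q L_0 \tau_{-q}$ as differential expressions. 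Everything then hinges on showing that $\tau_q$ is a bounded isomorphism of $\mathcal{E}_\alpha$ (it is obviously one on $\mathcal{E}_0$) that preserves the natural domain $\cD$.

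For the isomorphism property, I would note that the weight $\gamma_\alpha$ is smooth, strictly positive, and equals $\e^{\alpha_\pm x}$ outside a compact interval. Hence the ratio $\gamma_\alpha(x+q)/\gamma_\alpha(x)$ is bounded above and below by positive constants on all of $\RR$, as it equals $\e^{\alpha_\pm q}$ for $|x|$ large enough and is continuous and positive on the remaining compact set. A change of variables then shows $|\tau_q Y|_\alpha \leq C(q)|Y|_\alpha$ and similarly for $\tau_{-q}$, both in the $L^2$- and $BUC$-flavored cases (the $H^1$ case follows since $\tau_q$ commutes with $\partial_x$ and $(\gamma_\alpha)'/\gamma_\alpha$ is bounded). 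Preservation of $\cD$ is immediate from the definition of $\cD$ in Remark~\ref{DL}, because translation is a bijection on each of $H^k(\RR)$ and $BUC^k(\RR)$.

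Given the isomorphism $\tau_q : \mathcal{E}_\alpha \to \mathcal{E}_\alpha$ and the identity $\mathcal{L}_{q,\alpha} = \tau_q \mathcal{L}_{0,\alpha} \tau_{-q}$, the two operators are similar. Similarity preserves the full spectrum together with the decomposition into essential spectrum and isolated eigenvalues of finite algebraic multiplicity, as well as the algebraic and geometric multiplicities. Thus $\Spe(\mathcal{L}_{q,\alpha}) = \Spe(\mathcal{L}_{0,\alpha})$ lies strictly in the left half plane, giving (a), and the only spectrum in $\{\re \lambda \geq 0\}$ is a simple eigenvalue at $0$, giving (b). Finally, eigenfunctions transform by $\tau_q$: since $\mathcal{L}_{0,\alpha} Y_0' = 0$, we get $\mathcal{L}_{q,\alpha}(\tau_q Y_0') = 0$, and $(\tau_q Y_0')(x) = Y_0'(x-q) = Y_q'(x)$ is the desired eigenfunction (membership in $\mathcal{E}_\alpha$ follows from $Y_0' \in \mathcal{E}_\alpha$ and boundedness of $\tau_q$).

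The only point requiring care is the norm comparison for $\tau_q$ on the weighted space, but this is routine once one uses that $\gamma_\alpha$ is a pure exponential outside a compact set; I do not anticipate any deeper obstacle.
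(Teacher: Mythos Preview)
Your proposal is correct and takes essentially the same approach as the paper: both argue that $\mathcal{L}_{q,\alpha}$ and $\mathcal{L}_{0,\alpha}$ are similar via the translation $\tau_q$, from which the spectral assertions and the identification of $Y_q'=\tau_q Y_0'$ as the eigenfunction follow. The paper's proof is a two-line sketch of exactly this; you have supplied the routine details (boundedness of $\tau_q$ on $\mathcal{E}_\alpha$ via the ratio $\gamma_\alpha(\cdot)/\gamma_\alpha(\cdot-q)$, preservation of the domain) that the paper leaves implicit.
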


\begin{proof} 
The operators $\mathcal{L}_{q,\alpha}$ and $\mathcal{L}_{0,\alpha}$ are similar via the transformation 
$Y\mapsto Y(\cdot-q)$ which also maps $Y'$ into $Y_q'$. The assertions then easily follow.
\end{proof}

\begin{remark}\label{Projq}
Assume Hypothesis~\ref{HypSpL}. Lemma~\ref{SPq} says that $\lambda=0$ is an isolated simple eigenvalue for 
$\mathcal{L}_{q,\alpha}$. We let $P_q^c$ denote the spectral projection for $\mathcal{L}_{q,\alpha}$ in 
$\mathcal{E}_{\alpha}$ onto $\ker\mathcal{L}_{q,\alpha}=\Span\{Y_q'\}$.
Basic operator theory (see, e.g., \cite[Lemma 2.13]{DL}) yields that 
$$\Ran(I_{\mathcal{E}_{\alpha}}-P_q^c)=\ker P_q^c=\Ran(\mathcal{L}_{q,\alpha}).$$ 
Moreover, the one-dimensional projection $P_q^c$ is given by  \begin{equation}\label{PRF}
P_q^cY=\zeta_q(Y)\, Y_q',\qquad \zeta_q(Y_q')=1,
\end{equation}
for an element $\zeta_q$ in $\ker \mathcal{L}_{q,\alpha}^*$ which is also one dimensional, cf.\ 
\cite[Theorem~IV.5.13]{Kato}. As in the proof of Lemma \ref{SPq}, the operators $\mathcal{L}_{q,\alpha}^*$ and 
$\mathcal{L}_{0,\alpha}^*$ are similar and therefore the norms of $\zeta_q\in \mathcal{E}_{\alpha}^*$ are 
bounded uniformly for $|q|\le q_0$. Also, in view of Lemma~3.3 in \cite{GLS2}, the first three derivatives of 
the shifted wave $Y_q$ are bounded by $C\e^{-\omega_-\xi}$ for $\xi\le 0$ and by $C\e^{-\omega_+\xi}$ 
for $\xi\ge 0$ with  $\omega_\pm$ from Hypothesis~\ref{HRFR} and constants $C$ only depending on $q_0$.
We conclude that
\begin{align*}
|P_q^cY|_\alpha&= |\zeta_q(Y)|\,|Y_q'|_\alpha\le C|Y|_\alpha\,|Y_q'|_\alpha\le C|Y|_\beta\, |Y_q'|_\alpha,\\
|P_q^cY|_0&= |\zeta_q(Y)|\,|Y_q'|_0\le C|Y|_\alpha\,|Y_q'|_0\le C|Y|_\beta \,|Y_q'|_0.
\end{align*}
As a consequence, $P^c_q$ induces maps
\[ P_q^c \in \cB(\cE_\alpha)\cap  \cB(\cE_\beta,\cE_\alpha) \cap \cB(\cE_\alpha,\cE_\beta)
        \cap \cB(\cE_\beta)\cap \cB(\cE_\alpha,\cE_0) \cap  \cB(\cE_\beta,\cE_0)\]
 The complementary projection $P^s_q=I-P_q^c$ thus satisfies 
\[ P_q^s \in \cB(\cE_\alpha)\cap  \cB(\cE_\beta) \cap \cB(\cE_\beta,\cE_\alpha)\cap \cB(\cE_\beta,\cE_0).\]
We use the same notation $P_q^c$ and $P_q^s$ on all these spaces and their 
 norms are uniformly bounded for $|q|\le q_0$.
 The projections further satisfy 
\begin{equation}\label{eq:proj-diff}
\|P_q^c-P_p^c\|_{\mathcal{B}(\mathcal{E}_{\beta})}\leq C|q-p|, \qquad 
\|P_q^c-P_p^c\|_{\mathcal{B}(\mathcal{E}_{\alpha})}\leq C|q-p|
\end{equation}
for $|p|,|q|\leq q_0$  and a constant independent of $p$ and $q$.
In fact,  \eqref{eq4.3.3} yields
\begin{align}
L_q-L_p&=\partial_{Y}R(Y_0(\cdot-q))-\partial_{Y}R(Y_0(\cdot-p))\notag\\
&=\int_0^1\partial_{YY}R(sY_q+(1-s)Y_p)\dd s\;[Y_0(\cdot-q)-Y_0(\cdot-p)],\notag\\
Y_0(x-q)-Y_0(x-p)&=-\int_0^1Y_0'(x-p-s(q-p))(q-p)\dd s. \label{y-diff}
\end{align}
For $\cE_0=BUC$ we deduce
\begin{equation}\label{diffL}\begin{split}
\|\mathcal{L}_{q,\alpha}-\mathcal{L}_{p,\alpha}\|_{\mathcal{B}(\mathcal{E}_{\alpha})}
   &=\sup_{x\in\mathbb{R}}|\partial_{Y}R(Y_0(x-q))-\partial_{Y}R(Y_0(x-p))|
\leq C|q-p|,\\
\|\mathcal{L}_{q}-\mathcal{L}_{p}\|_{\mathcal{B}(\mathcal{E}_{0})}
   &=\sup_{x\in\mathbb{R}}|\partial_{Y}R(Y_0(x-q))-\partial_{Y}R(Y_0(x-p))|
\leq C|q-p|,
\end{split}\end{equation}
and similarly for $\cE_0=H^1$. These estimates can easily be transferred to the resolvents on 
a sufficiently small circle around 0  which implies the claim \eqref{eq:proj-diff}. \hfill$\Diamond$
\end{remark}

\begin{remark}\label{ProjSol}
To provide extra information, we now determine $\zeta_q$ from \eqref{PRF} as a solution of a differential 
equation. Remark~\ref{L2H1} yields that Hypothesis~\ref{HypSpL} is also true  if we replace $\cE_0$ by 
$L^2(\RR)$. We first determine $\zeta_q$ for the operator $\mathcal{L}_{q,\alpha}^*$ acting on the 
dual $L^2_\alpha(\RR)^*$ of the space $L^2_\alpha(\RR)$ of functions with the exponential weight 
$\gamma_\alpha$. We recall that the operator $\gamma_\alpha: L^2_\alpha(\RR)\to L^2(\RR);$
$Y(\cdot)\mapsto \gamma_\alpha(\cdot) Y(\cdot)$, is an isometric isomorphism.
Moreover, $L^2_\alpha(\RR)^*$ can be identified with $L^2$-space with the weight $1/\gamma_\alpha$, 
where the duality map between $L^2_\alpha(\RR)$ and $L^2_\alpha(\RR)^*$ is given by the usual (real) 
$L^2$-scalar product. Hence, the adjoint  operator $\gamma_\alpha^*:L^2(\RR)\to L^2_\alpha(\RR)^*$ 
coincides with the multiplication  operator by $\gamma_\alpha$.

The operator $\gamma_\alpha\mathcal{L}_{q,\alpha}\gamma_\alpha^{-1}$ in $L^2(\RR)$ is 
Fredholm since it is similar to the Fredholm operator $\mathcal{L}_{q,\alpha}$ in $L^2_\alpha(\RR)$.  
The adjoint of $\gamma_\alpha\mathcal{L}_{q,\alpha}\gamma_\alpha^{-1}$ 
in $L^2(\RR)$ is also Fredholm, and it is equal to 
 $\gamma_\alpha^{-1}\mathcal{L}_{q,\alpha}^*\gamma_\alpha$ since $\gamma_\alpha^*=\gamma_\alpha$. 
 We note that the dimension of the kernels is preserved by  similarity and duality. The
 functional $\zeta_q\in\ker\mathcal{L}_{q,\alpha}^*$ from \eqref{PRF} is then represented by 
 $\zeta_q=\gamma_\alpha Z_q$ where $Z_q\in L^2(\RR)$ belongs to 
 $\ker\big(\gamma_\alpha^{-1}\mathcal{L}_{q,\alpha}^*\gamma_\alpha\big)$. In other words, 
 $Z_q\in L^2(\RR)$ is the unique (up to a normalization) solution on $\RR$ of the differential equation 
 $\big(\gamma_\alpha^{-1}\mathcal{L}_{q,\alpha}^*\gamma_\alpha\big)Z_q=0$. Reasoning as in the proof of 
 Lemma~3.8 in \cite{GLS2} or in Remark~\ref{L2H1}
we conclude that 
 the solution $Z_q$ decays exponentially to zero as $x\to\pm\infty$.
 Moreover, $Z_q$ is  the translation $Z_0(\cdot-q)$  of $Z_0$, and the decay of the function $Z_q$ is 
 thus uniform in $q$ for $|q|\le q_0$. Formula \eqref{PRF} now yields
 \begin{equation}\label{piform}
  P_q^cY =\pi_q(Y)Y_q' \qquad\text{with}\quad  
  \pi_q(Y)=\int_{\mathbb{R}}\langle Z_q(x),\gamma_\alpha(x) Y(x)\rangle \dd x
 \end{equation}
for all $Y\in L^2_\alpha(\RR)$, where $Z_q$ is the exponentially decaying function normalized 
such that $\pi_q(Y_q')=1$.

Finally, returning to the cases $\mathcal{E}_0=H^1(\RR)^n$ or $\mathcal{E}_0=BUC(\RR)^n$, we notice that 
$\pi_q(\cdot)$ is a bounded functional on $\mathcal{E}_\alpha$ in both cases. Using also the decay properties
of $Y_q'$ recalled in Remark~\ref{Projq},
we confirm from \eqref{piform} once again that $P_q^c$ is a bounded operator from both  
$\mathcal{E}_\beta$ and $\mathcal{E}_\alpha$ into $\mathcal{E}_\beta$, with uniform constants 
for $q\in [-q_0,q_0]$. \hfill$\Diamond$
\end{remark}

\begin{remark}\label{RB}
Let $B_q$ be multiplication operator induced by the matrix valued function $B_q(\cdot)$ from
\eqref{eq4.3.6}. Lemma~8.2 of \cite{GLS2} says that $B_q$ belongs to $\mathcal{B}(\cE_{\alpha},\cE_0)$.
 As in assertion (3) of this lemma, one also sees that 
 $\|B_q-B_p\|_{\mathcal{B}(\cE_{\alpha},\cE_0)}\le C|q-p|$ for $q,p\in[-q_0,q_0]$. Inspecting the proofs, 
 we see that the constants do not depend on $p$ and $q$, but on $q_0$.
\hfill$\Diamond$
\end{remark}

The operators $\mathcal{L}_q$ and $\mathcal{L}_{q,\alpha}$ generate strongly continuous semigroups
on $\mathcal{E}_0$ and $\mathcal{E}_\alpha$, respectively, which are both denoted by
$\{ T_q(t)\}_{t\geq 0}$, see e.g.\ \cite[\S 2.2]{GLS}. By Lemma~\ref{SPq}, there are numbers
\[0>-\nu> \sup\{\re\lambda:\lambda \in \Sp(\mathcal{L}_{q,\alpha})\setminus \{0\}\},\]
 Lemma~3.13 of \cite{GLS2} then yields the exponential decay
\begin{equation}\label{exp-stab}
\|T_q(t)P_q^s\|_{\mathcal{B}(\mathcal{E}_{\alpha})}\leq C\e^{-\nu t},\qquad t\ge0,
\end{equation}
see also \cite{GLS}. The constant $C$ can be chosen unform in $q$ because
of the transformation used in the proof of Lemma~\ref{SPq}.

Also the operators $\cL^-$ and $\cL^-_\alpha$ generate  strongly continuous semigroups
on $\mathcal{E}_0$ and $\mathcal{E}_\alpha$, designated by $\{ S(t)\}_{t\geq 0}$.
Since the multiplication operator $B_q$ is bounded on these spaces,  formula \eqref{eq4.3.6} implies
 the variation of constant formula
\begin{equation}\label{eq4.5.1}
T_q(t-\tau)=S(t-\tau)+\int_{\tau}^{t}S(t-s)B_qT_q(s-\tau)\dd s,\qquad t\geq\tau\geq0,\ q\in\mathbb{R}.
\end{equation}
 The upper triangular structure of the operator $\mathcal{L}^-$ indicated in \eqref{eq4.4.1} 
implies an analogous representation of the semigroup
 \begin{equation}\label{eq4.5.2}
S(t)=\begin{pmatrix}
S_1(t) & Q(t)\\0 & S_2(t)
\end{pmatrix} \quad\text{ and } \quad Q(t)=\int_0^t S_1(t-s)\partial_VR_1(0,0)S_2(s)\dd s.
\end{equation}
Here $\{ S_1(t) \}_{t\geq 0}$ and $\{ S_2(t) \}_{t\geq 0}$ are the semigroups generated by the operators $
\mathcal{L}^{(1)}$ and $\mathcal{L}^{(2)}$ from \eqref{eq4.4.2}, respectively. On these semigroups
we impose the following assumptions.

\begin{hypothesis}\label{hypos}
The strongly continuous semigroup $\{S_1(t)\}_{t\geq 0}$ is bounded and the semigroup 
$\{S_2(t)\}_{t\geq 0}$ is uniformly exponentially stable on $\mathcal{E}_0$; that is,
\begin{equation*}
\left\| S_1(t) \right\|_{\mathcal{B}(\mathcal{E}_0)}\leq C, \qquad 
\left\| S_2(t) \right\|_{\mathcal{B}(\mathcal{E}_0)} \leq C\e^{-\rho t}
\end{equation*}
for some $\rho>0$ and all $t\geq 0$.
\end{hypothesis}

Hypothesis~\ref{hypos} and \eqref{eq4.5.2} imply the boundedness of $\{ S(t) \}_{t\geq 0}$ on $\cE_0$; i.e.,
\begin{equation}\label{eq4.5.4}
\left\| S(t) \right\|_{\mathcal{B}(\mathcal{E}_0)}\leq C,\qquad t\geq0.
\end{equation} 

We next show that the semigroup $\{T_q(t)\}_{t\geq 0}$ is bounded on the space $\mathcal{E}_\beta$, too
\begin{lemma}\label{lemBT}
Assume Hypotheses~\ref{HypSpL} and \ref{hypos}. Take $q_0>0$  and let $\alpha=(\alpha_-,\alpha_+)$ 
satisfy \eqref{alphaomega}. Then we have
\begin{equation}\label{BTalpha}
\sup\limits_{|q|\leq q_0}\sup\limits_{t\geq 0}\|T_q(t)\|_{\mathcal{B}(\mathcal{E}_{\beta})}<\infty.
\end{equation}
\end{lemma}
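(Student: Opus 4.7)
The plan is to bound the two parts of the $\beta$-norm separately by splitting $y\in\cE_\beta$ along the spectral projection $P_q^c$ associated to the simple zero eigenvalue and its complement $P_q^s=I-P_q^c$. Since $P_q^c y=\zeta_q(y)\,Y_q'$ lies in $\ker\mathcal{L}_{q,\alpha}$, the semigroup fixes it: $T_q(t)P_q^c y=P_q^c y$ for all $t\ge 0$. By Remark~\ref{Projq} the norms $\|P_q^c\|_{\mathcal{B}(\cE_\beta)}$ and $\|P_q^s\|_{\mathcal{B}(\cE_\beta)}$ are uniformly bounded in $|q|\le q_0$, so this immediately gives a uniform bound $|P_q^c y|_\beta\le C|y|_\beta$.

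The $\mathcal{E}_\alpha$-bound of $T_q(t)y$ is then the easy half. Using the splitting and the exponential decay \eqref{exp-stab} on the stable subspace, I obtain
\begin{equation*}
|T_q(t)y|_\alpha \le |P_q^cy|_\alpha + |T_q(t)P_q^sy|_\alpha
\le C|y|_\beta + Ce^{-\nu t}|P_q^sy|_\alpha \le C|y|_\beta,
\end{equation*}
uniformly in $t\ge 0$ and $|q|\le q_0$.

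The harder step is the $\mathcal{E}_0$-bound of $T_q(t)P_q^s y$, since the stable decay estimate from Lemma~3.13 of \cite{GLS2} is formulated in the weighted space. Here I invoke the variation of constants formula \eqref{eq4.5.1}, the uniform bound \eqref{eq4.5.4} on $\{S(t)\}$ in $\mathcal{B}(\cE_0)$ coming from Hypothesis~\ref{hypos} and the upper triangular structure \eqref{eq4.5.2}, together with the crucial smoothing property of $B_q$ as an operator from $\cE_\alpha$ to $\cE_0$ recalled in Remark~\ref{RB}. Applying \eqref{eq4.5.1} to $P_q^s y$ I estimate
\begin{equation*}
|T_q(t)P_q^sy|_0 \le C|P_q^sy|_0 + C\int_0^t \|B_q\|_{\mathcal{B}(\cE_\alpha,\cE_0)}\,|T_q(s)P_q^sy|_\alpha\,\D s,
\end{equation*}
and then bound the integrand by $Ce^{-\nu s}|y|_\beta$ using \eqref{exp-stab}, yielding a convergent integral of size at most $C|y|_\beta/\nu$. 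Combining with $|T_q(t)P_q^c y|_0=|P_q^c y|_0\le C|y|_\beta$ gives $|T_q(t)y|_0\le C|y|_\beta$, again uniformly in $t\ge 0$ and $|q|\le q_0$. Taking the maximum of the two estimates yields \eqref{BTalpha}. The main obstacle is precisely the need to transfer exponential decay from the weighted space into a uniform bound in $\cE_0$; this is made possible only by the interplay of the product structure of $R$ (which gives $B_q\in \mathcal{B}(\cE_\alpha,\cE_0)$) with the boundedness of $S(t)$ on $\cE_0$ coming from Hypothesis~\ref{hypos}.
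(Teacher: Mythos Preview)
Your proof is correct and follows essentially the same approach as the paper: both split $T_q(t)=T_q(t)P_q^s+T_q(t)P_q^c$, use that $T_q(t)P_q^c=P_q^c$ together with the uniform boundedness of $P_q^c$ on $\cE_\beta$, invoke \eqref{exp-stab} for the $\cE_\alpha$-estimate of the stable part, and obtain the $\cE_0$-bound of $T_q(t)P_q^s$ from the variation of constants formula \eqref{eq4.5.1} combined with \eqref{eq4.5.4}, $B_q\in\cB(\cE_\alpha,\cE_0)$, and the exponential decay in $\cE_\alpha$. The only cosmetic difference is the order of presentation.
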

\begin{proof}
 The variation of constant formula \eqref{eq4.5.1} yields on $\mathcal{E}_\beta$
\begin{equation} \label{TSST}
T_q(t)P_q^s=S(t)P_q^s+\int_{0}^{t}S(t-s)B_qT_q(s)P_q^s\dd s.
\end{equation}
As noted in Remark~\ref{Projq} and \eqref{eq4.5.4}, the projection $P_q^s$ belongs to 
$\mathcal{B}(\mathcal{E}_\beta, \mathcal{E}_0)$  and to $\mathcal{B}(\mathcal{E}_\beta, \mathcal{E}_\alpha)$ 
while the semigroup $S(t)$ is uniformly bounded in $\cE_0$ for $|q|\le q_0$ and $t\ge0$, respectively. 
Using \eqref{TSST}, these facts, Remark~\ref{RB} and the exponential decay in \eqref{exp-stab}, we can estimate
\begin{equation*}
\begin{aligned}
\|T_q(t)P_q^s\|_{\mathcal{B}(\mathcal{E}_\beta, \mathcal{E}_0)}
 &\leq C\|P_q^s\|_{\mathcal{B}(\mathcal{E}_\beta, \mathcal{E}_0)}\\
   &\qquad+C\int_0^t\|B_q\|_{\mathcal{B}(\cE_{\alpha},\cE_0)}\|T_q(s)P_q^s\|_{\mathcal{B} (\cE_{\alpha})} 
             \|P_q^s\|_{\mathcal{B}(\mathcal{E}_\beta, \mathcal{E}_\alpha)}\dd s\\
   &\leq C+C\int_0^t \e^{-\nu t}\dd s\leq C
\end{aligned}
\end{equation*}
for all $t\geq0$ and $|q|\leq q_0$, with uniform constants. In view of the inequality 
$\|T_q(t)P_q^s\|_{\mathcal{B}(\mathcal{E}_{\alpha})}\leq C\e^{-\nu t}$ from \eqref{exp-stab}, we have proved
\eqref{BTalpha} with $T_q(t)$ replaced by $T_q(t)P_q^s$. Writing the semigroup as
$T_q(t)=T_q(t)P_q^s+T_q(t)P_q^c$ on $\mathcal{E}_\beta$, it remains to show \eqref{BTalpha} 
with $T_q(t)$ replaced by $T_q(t)P_q^c$. Recall from Remark~\ref{Projq} that 
$P_q^c=I-P_q^s\in\mathcal{B}(\mathcal{E}_\beta)$ projects $\mathcal{E}_\beta$ onto the kernel of the 
generators $\mathcal{L}_{q,0}$ and $\mathcal{L}_{q,\alpha}$ of the semigroup $\{ T_q(t)\}_{t\geq 0}$ on 
$\mathcal{E}_0$ and $\mathcal{E}_\alpha$. We conclude that  $T_q(t)P_q^cY=P_q^cY$ for all 
$Y\in\mathcal{E}_\beta$ and $t\ge0$. Therefore, $\|T_q(t)P_q^c\|_{\mathcal{B}(\cE_\beta)}\le C$  for  
$t\geq 0$, completing the proof of \eqref{BTalpha}.
\end{proof}

\section{The Lyapunov-Perron operator}\label{sec2}
In this section we introduce the Lyapunov-Perron operator associated with the nonlinear equation 
\eqref{eq4.3.3} and show that it is a contraction of a small ball in a certain space of functions
$u:\RR\to\cE_0\cap\cE_\alpha$. First, we establish the 
main technical estimates for the nonlinearity $F_q:\RR^n\rightarrow\mathbb{R}^n$ defined in \eqref{eq4.3.4}.

\begin{lemma}\label{mainest}
Assume that $\alpha=(\alpha_-,\alpha_+)$ satisfies \eqref{alphaomega} and that the nonlinearity 
$R\in C^3(\mathbb{R}^n,\mathbb{R}^n)$ fulfills \eqref{eq4.3.1}. Let $\delta_1>0$ and choose a radius 
$\delta\in(0,\delta_1]$. Then for all functions $y=(u,v)$ and $\bar{y}=(\bar{u},\bar{v})$  from 
$\mathcal{E}_\beta$ with $|y|_\beta,|\bar{y}|_\beta \le\delta$ the  estimates 
\begin{align}\label{eq4.6.1}
|F_q(y)|_0&\leq C|y|_0\,(|y|_{\alpha}+|v|_0),\\
|F_q(y)|_{\alpha}&\leq C|y|_0\,|y|_{\alpha},\label{eq4.6.2}\\
|F_q(y)-F_q(\bar{y})|_0&\leq C\big(|y-\bar{y}|_0\,(|y|_{\alpha}+|\bar{y}|_{\alpha})+|y-\bar{y}|_0\,|v|_0
  +|\bar{y}|_0\, |v-\bar{v}|_0\big),\label{eq4.6.3}\\
|F_q(y)-F_q(\bar{y})|_{\alpha}
     &\leq |y-\bar{y}|_{\alpha}\,(|y|_0+|\bar{y}|_0) 
   \label{eq4.6.4}
\end{align}
are true, where $C=C(\delta_1, q_0)$ and $|q|\le q_0$.
\end{lemma}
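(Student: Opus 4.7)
The plan is to first rewrite $F_q$ as a second-order Taylor remainder,
\[
F_q(Y) = \int_0^1\!\int_0^1 t\, \partial_{YY} R(Y_q + stY)[Y,Y]\dd s \dd t,
\]
and then to exploit the product structure of Hypothesis~\ref{HRFR}. Writing $Y=(U,V)$ and $Y_q = (U_q, V_q)$, the identities $R_j(U, 0) = 0$ for $j=1,2$ give $\partial_U^k R_j(U, 0) = 0$ for $k=1,2,3$. Integrating in the $V$-variable then yields the structural bounds $|\partial_{UU} R_j(U, V)| \le C|V|$ and $|\partial_{UUU} R_j(U, V)| \le C|V|$ on bounded sets, while the other blocks $\partial_{UV}R$, $\partial_{VV}R$, $\partial_{VVV}R$ etc.\ are merely bounded.

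Splitting $\partial_{YY} R[Y,Y] = \partial_{UU} R[U,U] + 2\partial_{UV} R[U,V] + \partial_{VV} R[V,V]$ and evaluating at $Y_q + stY$, the first summand picks up an extra factor $|V_q + stV|$ from the structural bound, while the other two carry an explicit factor $V$ from the test vector. This gives the pointwise estimate
\[
|F_q(Y)(x)| \le C(|V_q(x)| + |V(x)|)|U(x)|^2 + C|U(x)||V(x)| + C|V(x)|^2.
\]
To translate this into \eqref{eq4.6.1} and \eqref{eq4.6.2}, the key observation is that the decay $|Y_q(x)| \le Ce^{-\omega_- x}$ for $x\le -q_0$ combined with \eqref{alphaomega} (in particular $-\omega_- > \alpha_-$) yields $\sup_x |V_q(x)/\gamma_\alpha(x)|\le C$, uniformly in $|q|\le q_0$. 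Then the delicate $V_q|U|^2$ term is controlled by $|V_q/\gamma_\alpha|_\infty\,|\gamma_\alpha U|_\infty\,|U|_0 \le C|Y|_\alpha |Y|_0$, using the embedding $\gamma_\alpha U\in L^\infty$ valid for both choices of $\cE_0$; the remaining mixed terms $|V||U|^2$, $|U||V|$, $|V|^2$ are handled directly, absorbing a factor $|Y|_\beta \le \delta$ where needed. Multiplying the pointwise bound by $\gamma_\alpha$ and repeating the argument, with the $\gamma_\alpha$-weight now attached to one of the $U$ or $V$ factors carrying no decay from $V_q$, produces \eqref{eq4.6.2}.

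For the Lipschitz estimates \eqref{eq4.6.3}--\eqref{eq4.6.4}, I would decompose
\[
\partial_{YY} R(Y_q + stY)[Y,Y] - \partial_{YY} R(Y_q + st\bar Y)[\bar Y,\bar Y] = I+II+III,
\]
with $II = \partial_{YY} R(Y_q + st\bar Y)[Y-\bar Y, Y]$, $III = \partial_{YY} R(Y_q + st\bar Y)[\bar Y, Y-\bar Y]$, and $I$ the term involving the difference $\partial_{YY} R(Y_q + stY) - \partial_{YY} R(Y_q + st\bar Y)$, which is rewritten via $\partial_{YYY} R$ applied to $st(Y-\bar Y)$. Each of these three pieces is then estimated by the same splitting as before, now additionally using $|\partial_{UUU} R_j(U,V)|\le C|V|$ for the $\partial_{UUU} R[U,U,U]$ summand inside $I$, to produce pointwise bounds that are quadratic in $(|Y|, |\bar Y|)$ and linear in $|Y-\bar Y|$, with each obstructed term carrying a factor of $V$, $\bar V$, $V_q$, or $V-\bar V$.

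The main obstacle I anticipate is the bookkeeping in this final step: matching every summand to the correct piece on the right-hand side of \eqref{eq4.6.3} or \eqref{eq4.6.4}, ensuring that the term $|\bar Y|_0\,|V - \bar V|_0$ in \eqref{eq4.6.3} only arises from summands in which $V-\bar V$ is genuinely produced as a test vector in $II$ or $III$, and that every obstructed $(|V_q|+|V|+|\bar V|)|U|^2$-type term is paired with a $\gamma_\alpha U$ factor upon passing to the unweighted or weighted norm. No deeper idea is needed beyond the structural identities $\partial_U^k R_j(U,0)=0$ and the decay of $V_q$ relative to $\gamma_\alpha$; the rest is routine manipulation using that the product of two bounded functions in $\cE_0$ lies in $\cE_0$, together with the one-dimensional Sobolev embedding $H^1(\RR)\hookrightarrow L^\infty(\RR)$ when $\cE_0=H^1$.
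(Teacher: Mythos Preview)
Your approach is a genuine alternative to the paper's. The paper does not use the Taylor remainder
\[
F_q(Y)=\int_0^1\!\int_0^1 t\,\partial_{YY}R(Y_q+stY)[Y,Y]\dd s\dd t
\]
directly; instead it first factors out the $V$-variable explicitly via the auxiliary function
$r(u,v)=\int_0^1\partial_vR(u,tv)\dd t$, so that $F_q(y)=I_1+\cdots+I_5$ where each $I_j$ carries an
\emph{explicit} factor $V_q$ or $v$ (coming from $R(U,V)-R(U,0)=r(U,V)V$). Your structural bounds
$|\partial_U^kR_j(U,V)|\le C|V|$ and the fact that $\gamma_\alpha^{-1}V_q$ is bounded are exactly the
ingredients underlying the paper's argument, so \eqref{eq4.6.1}, \eqref{eq4.6.2} and \eqref{eq4.6.4}
go through as you describe.

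The one place your route costs something is \eqref{eq4.6.3}. In your decomposition, the Hessian in
$II,III$ is evaluated at $Y_q+st\bar Y$, so its $\partial_{UU}$-block is bounded by $C(|V_q|+|\bar V|)$,
and the $\partial_{VU}$-block of $II$ produces $C|V-\bar V|\,|U|$, while that of $III$ produces
$C|\bar V|\,|U-\bar U|$. After taking $|\cdot|_0$ these give contributions
$|y|_0\,|v-\bar v|_0$ and $|y-\bar y|_0\,|\bar v|_0$, which are \emph{not} dominated by the asymmetric
right-hand side of \eqref{eq4.6.3} (try $y=(u,0)$, $\bar y=(0,\bar v)$ supported where
$\gamma_\alpha$ is tiny). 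What your argument actually yields is the symmetrized estimate
\[
|F_q(y)-F_q(\bar y)|_0\le C\Big(|y-\bar y|_0\,(|y|_\alpha+|\bar y|_\alpha+|v|_0+|\bar v|_0)
+(|y|_0+|\bar y|_0)\,|v-\bar v|_0\Big),
\]
which is weaker than \eqref{eq4.6.3} as stated but still sufficient for the contraction estimates in
Lemma~\ref{Lproper}. The paper's explicit factorization via $r$ is precisely what keeps the $v$-factor
(rather than $\bar v$) pinned down and delivers the asymmetric form; your anticipated ``bookkeeping
obstacle'' is real and cannot be resolved by the structural inequalities $|\partial_U^kR_j(U,V)|\le C|V|$
alone.
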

\begin{proof}
Let $|y|_\beta,|\bar{y}|_\beta \le \delta\le \delta_1$. From the proof of Lemma~8.3 
in \cite{GLS2} we recall the 
representation
\begin{equation*}
F_q(y)=I_1(y)+I_2(y)+I_3(y)+I_4(y)+I_5(y),
\end{equation*}
where $Y_q=(U_q,V_q)$, $y=(u,v)$,
\begin{align*}
I_1(y)&=\int_0^1\left( \partial_ur(Y_q+ty)-\partial_ur(Y_q) \right)uV_q\dd t,\\
I_2(y)&=\int_{0}^{1}\left(\partial_ur(Y_q+ty)u\right)tv\dd t,\\
I_3(y)&=\int_0^1\left( \partial_vr(Y_q+ty)-\partial_vr(Y_q) \right)vV_q\dd t,\\
I_4(y)&=\int_0^1\left( \partial_vr(Y_q+ty) v\right)tv\dd t,\\
I_5(y)&=\int_0^1\left( r(Y_q+ty)-r(Y_q) \right)v\dd t,
\end{align*}
and the  function $r\in C^2(\RR^n,\RR^{n\times n})$ is given by 
\begin{equation*}
r(u,v)=\int_0^1\partial_vR(u,tv)\dd t.
\end{equation*}
We note that $r$ is only applied to functions which are uniformly bounded by $C(1+\delta_1)$. It is then 
straightforward to check the inequalities $|I_j(y)|_0\leq C|y|_0\,|v|_0$ for $j\in \{2,\dots,5\}$ and 
$|I_j(y)|_{\alpha}\leq C|y|_0\,|y|_{\alpha}$ for $j\in \{1,2,\dots,5\}$. Since 
$uV_q=(\gamma_{\alpha}u)(\gamma_{\alpha}^{-1}V_q)$ and $(\gamma_{\alpha}^{-1}V_q)\in BUC^1(\mathbb{R})^{n_1}$
 by Lemma~3.7  of \cite{GLS2}, we can further estimate $|I_1(y)|_0\leq C|y|_0\,|y|_{\alpha}$, 
 finishing the proof of \eqref{eq4.6.1} and \eqref{eq4.6.2}. Here and below the constants only depend on 
 $\delta_1$  and $q_0$.

To show \eqref{eq4.6.3} and \eqref{eq4.6.4}, we deal with each integral $I_j$ separately.
The terms $|y-\bar{y}|_0\,(|y|_{\alpha}+ |\bar{y}|_{\alpha})$ and $|y-\bar{y}|_{\alpha}\,(|y|_0+|\bar{y}|_0)$ 
come from $I_1$ while the remaining ones originate from $I_2$ through $I_5$. We first represent
$I_1(y)-I_1(\bar{y})$ as
\begin{equation}\label{eq4.6.5}
\begin{aligned}
I_1(y)-I_1(\bar{y})
 &=\int_0^1\int_0^1\partial_y\partial_ur(Y_q+st(y-\bar{y})+t\bar{y})uV_qt(y-\bar{y})\dd s\dd t\\
&\qquad +\int_0^1\int_0^1\partial_y\partial_ur(Y_q+st\bar{y})(u-\bar{u})V_qt\bar{y}\dd s\dd t.
\end{aligned}
\end{equation}
Using $uV_q(y-\bar{y})=(\gamma_{\alpha}u)(\gamma_{\alpha}^{-1}V_q)(y-\bar{y})$ and 
$(u-\bar{u})V_q\bar{y}=(u-\bar{u})\,\gamma_{\alpha}^{-1}V_q\,\gamma_{\alpha}y$ as above, we conclude that
$|I_1(y)-I_1(\bar{y})|_0\leq C|y-\bar{y}|_0\,(|y|_{\alpha}+|\bar{y}|_{\alpha})$. If we multiply 
\eqref{eq4.6.5} by $\gamma_{\alpha}$, we directly estimate 
$|I_1(y)-I_1(\bar{y})|_{\alpha}\leq C(|y|_0+|\bar{y}|_0)\,|y-\bar{y}|_{\alpha}$ 
since $|u|\leq |y|$.  Likewise, we write $I_5(y)-I_5(\bar{y})$ as 
\begin{equation}\label{eq4.6.6}
\begin{aligned}
I_5(y)-I_5(\bar{y})
&=\int_0^1\int_0^1\left(\partial_yr(Y_q+sty)-\partial_yr(Y_q+st\bar{y})\right)tyv\dd s\dd t\\
&\qquad+\int_0^1\int_0^1\partial_yr(Y_q+ts\bar{y})tv(y-\bar{y})\dd s\dd t\\
&\qquad+\int_0^1\int_0^1\partial_yr(Y_q+st\bar{y})t\bar{y}(v-\bar{v})\dd s\dd t
\end{aligned}
\end{equation}
and obtain the bounds $|I_5(y)-I_5(\bar{y})|_0\leq C(|y-\bar{y}|_0\,|v|_0+|\bar{y}|_0\,|v-\bar{v}|_0)$, 
recalling that $|y|_0\leq \delta_1$ by assumption. After multiplying \eqref{eq4.6.6} by $\gamma_{\alpha}$,
it also follows that
$|I_5(y)-I_5(\bar{y})|_{\alpha}\leq C(|y|_0\,|y-\bar{y}|_{\alpha}+|\bar{y}|_0\,|v-\bar{v}|_{\alpha})$ 
since $|v|\leq |y|$. Similarly, the formulas
\begin{align}\label{eq4.6.7}
I_2(y)-I_2(\bar{y})&=\int_0^1\big(\partial_ur(Y_q+ty)-\partial_ur(Y_q+t\bar{y})\big)utv\dd t\\
&\qquad+\int_{0}^{1}\partial_ur(Y_q+t\bar{y})(u-\bar{u})tv\dd t
         +\int_0^1\partial_ur(Y_q+t\bar{y})\bar{u}t(v-\bar{v})\dd t, \notag\\
I_4(y)-I_4(\bar{y})&=\int_0^1\big(\partial_vr(Y_q+ty)-\partial_vr(Y_q+t\bar{y})\big)vtv\dd t\label{eq4.6.8}\\
&\qquad+\int_{0}^{1}\partial_vr(Y_q+t\bar{y})(v-\bar{v})tv\dd t
   +\int_0^1\partial_vr(Y_q+t\bar{y})\bar{v}t(v-\bar{v})\dd t \notag
\end{align}
imply  the inequalities 
\begin{align*}
|I_2(y)-I_2(\bar{y})|_0 &\leq C(|y-\bar{y}|_0\,|v|_0+|\bar{y}|_0\,|v-\bar{v}|_0),\\
|I_4(y)-I_4(\bar{y})|_0 &\leq C(|y-\bar{y}|_0\,|v|_0+|\bar{y}|_0\,|v-\bar{v}|_0).
\end{align*}
Multiplying \eqref{eq4.6.7} and \eqref{eq4.6.8} by $\gamma_{\alpha}$, we also derive
\begin{align*}
|I_2(y)-I_2(\bar{y})|_{\alpha} &\leq C(|y|_0\,|y-\bar{y}|_{\alpha}+|\bar{y}|_0\,|v-\bar{v}|_{\alpha}),\\
|I_4(y)-I_4(\bar{y})|_{\alpha} &\leq C(|y|_0\,|y-\bar{y}|_{\alpha}+|\bar{y}|_0\,|v-\bar{v}|_{\alpha}).
\end{align*}
We finally compute
\begin{equation*}
\begin{aligned}
I_3(y)-I_3(\bar{y})
&=\int_0^1\int_0^1\partial_y\partial_vr(Y_q+st(y-\bar{y})+t\bar{y})vV_qt(y-\bar{y})\dd s\dd t\\
&\qquad +\int_0^1\int_0^1\partial_y\partial_vr(Y_q+st\bar{y})(v-\bar{v})V_qt\bar{y}\dd s\dd t.
\end{aligned}
\end{equation*}
Again we infer that
\begin{align*}
|I_3(y)-I_3(\bar{y})|_0&\leq C(|y-\bar{y}|_0\,|v|_0+|\bar{y}|_0\,|v-\bar{v}|_0),\\
|I_3(y)-I_3(\bar{y})|_{\alpha}&\leq C(|y|_0 \,|y-\bar{y}|_{\alpha}+|\bar{y}|_0\,|v-\bar{v}|_{\alpha}).
\end{align*} 
This completes the proof of the lemma.
\end{proof}

\begin{remark}\label{rem:e-beta}
It follows from the observations after Remark~\ref{RB} that the realization of $L_q$ in 
$\cE_\beta=\cE_0\cap\cE_\alpha$ generates a strongly continuous semigroup. The Lipschitz properties 
proved in the above lemma thus imply that the semilinear equation \eqref{eq4.3.3} is locally wellposed 
also in $\cE_\beta$, cf.\ Remark~\ref{DL}. \hfill $\Diamond$
\end{remark}

We next  establish basic properties of the Lyapunov-Perron operator $\Phi_q(y,z_0)$ defined by
\begin{equation}\label{eq4.7.1}
\Phi_q(y,z_0)(t)=T_q(t)P_q^{\mathrm{s}}z_0 +\int_0^t T_q(t-\tau)P_q^{\mathrm{s}}F_q(y(\tau))\dd\tau
 -\int_t^{\infty}\!\!P_q^{\mathrm{c}}F_q(y(\tau))\dd\tau,
\end{equation}
where $|q|\le q_0$ and $z_0\in\mathcal{E}_0\cap\mathcal{E}_{\alpha}=\cE_\beta$ satisfies
\begin{equation}\label{eq4.7.2}
|z_0|_{\beta}=\max\{ |z_0|_0,|z_0|_{\alpha} \}\leq \delta_0,
\end{equation}
for some  $\delta_0>0$. Here we use that  $P_q^c$ maps into the kernel of the 
generator of $\{T_q(t)\}_{t\ge0}$, see Remark~\ref{Projq}, so that the semigroup is just the identity 
on the range of $P_q^c$ and we can omit it in  the 
second integral in \eqref{eq4.7.1}.

For a continuous map $y=(u,v):\mathbb{R}\rightarrow \mathcal{E}_\beta=\cE_0\cap\cE_\alpha$ 
we define the norms
\begin{equation*}
\|y\|_{\omega,\alpha}=\sup\limits_{t\geq 0}\e^{\omega t}|y(t)|_{\alpha},\quad
\|y\|_{0,0}=\sup\limits_{t\geq 0}|y(t)|_0,\quad
\|v\|_{\omega,0}=\sup\limits_{t\geq 0} \e^{\omega t}|v(t)|_0,
\end{equation*}
where $\omega>0$ is specified below and $\alpha=(\alpha_-,\alpha_+)$ is given by \eqref{alphaomega}.
Let $\delta>0$. Then  $(\mathbb{B}_{\delta},\|\cdot\|)$ is the set of continuous functions 
$y:\mathbb{R}\rightarrow \mathcal{E}_0\cap \mathcal{E}_{\alpha}$ such that 
\begin{equation}\label{eq4.7.3}
\|y\|:=\max\left( \|y\|_{\omega,\alpha},\|y\|_{0,0},\|v\|_{\omega,0}\right)\leq \delta.
\end{equation}

We recall from Hypothesis~\ref{hypos} and \eqref{exp-stab} the exponential estimates
\begin{equation}\label{eq4.7.5}
\|S_2(t)\|_{\mathcal{B}(\mathcal{E}_0)}\leq C\e^{-\rho t},\qquad
\|T_q(t)P_q^{\mathrm{s}}\|_{\mathcal{B}(\mathcal{E}_{\alpha})}\leq C\e^{-\nu t}
\end{equation}
for $t\ge0$. For technical reasons (see the next proof), if necessary we have to modify these exponents 
 such that 
\begin{equation}\label{eq4.7.4}
0<\omega<\rho<\nu.
\end{equation}
 This is always possible, though one may lose information here. By Lemma~\ref{lemBT}, 
the semigroup $\{ T_q(t) \}_{t\geq 0}$ is bounded in  $\mathcal{E}_{\beta}$.
The above constants do not depend on $|q|\le q_0$. 

\begin{lemma}\label{Lproper}
Take $q_0>0$.
Let $\delta>0$ and $\delta_0=\delta_0(\delta)>0$ be small enough. For each 
$z_0\in\mathbb{B}_{\delta_0}(\left|\cdot\right|_{\beta})$  the Lyapunov-Perron 
operator $y\mapsto\Phi_q(y,z_0)$ leaves $\mathbb{B}_{\delta}(\|\cdot\|)$ invariant and is a strict 
contraction on this ball for  all $\left|q\right|\leq q_0$. Moreover, for the norm $\|\cdot\|$ defined
in \eqref{eq4.7.3} one has 
\begin{equation}\label{LipPhiz}
\|\Phi_q(y,z_0)-\Phi_q(\bar{y},\bar{z}_0)\|\leq C|z_0-\bar{z}_0|_{\beta}+C\delta\|y-\bar{y}\|
\end{equation}
for some $C>0$ and all $z_0,\bar{z}_0\in \mathbb{B}_{\delta_0}(|\cdot|_{\beta})$, 
$y,\bar{y}\in \mathbb{B}_{\delta}(\|\cdot\|)$, and $|q|\leq q_0$.
\end{lemma}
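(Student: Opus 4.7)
The norm $\|\cdot\|$ is a maximum of three quantities, so I will verify the ball-invariance by estimating each of $\|\Phi_q(y,z_0)\|_{\omega,\alpha}$, $\|\Phi_q(y,z_0)\|_{0,0}$, and $\|[\Phi_q(y,z_0)]_2\|_{\omega,0}$ separately under the assumption $\|y\|\le\delta$ and $|z_0|_\beta\le\delta_0$. The $\|\cdot\|_{\omega,\alpha}$-bound is the most direct: it combines the exponential estimate \eqref{exp-stab} for $T_q(t)P_q^s$ on $\mathcal{E}_\alpha$, the mapping properties of $P_q^c$ and $P_q^s$ from Remark~\ref{Projq}, the bound $|F_q(y(\tau))|_\alpha\le C|y(\tau)|_0\,|y(\tau)|_\alpha\le C\delta^2\e^{-\omega\tau}$ from \eqref{eq4.6.2}, and an elementary convolution estimate that converges because $\omega<\nu$. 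For $\|\cdot\|_{0,0}$ I will use the uniform boundedness of $T_q(t)$ on $\mathcal{E}_\beta$ from Lemma~\ref{lemBT} together with $P_q^c\in\mathcal{B}(\mathcal{E}_\alpha,\mathcal{E}_0)$ (Remark~\ref{Projq}); the integrability in $\tau$ is supplied by the exponentially decaying estimates \eqref{eq4.6.1}--\eqref{eq4.6.2} applied to $y(\tau)$, which are in fact $O(\e^{-\omega\tau})$ under $\|y\|\le\delta$.

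The main obstacle is the $v$-component bound $\|[\Phi_q(y,z_0)]_2\|_{\omega,0}$, because the full semigroup $T_q(t)$ is only bounded, not decaying, in the unweighted norm $|\cdot|_0$. To recover exponential decay of the second component specifically, my plan is to substitute the variation-of-constants identity \eqref{eq4.5.1} to rewrite $T_q$ in terms of $S$ and then exploit the upper-triangular structure of $S(t)$ from \eqref{eq4.5.2}: taking the second component kills the $Q(t)$-entry and leaves only $S_2(t)$, which decays like $\e^{-\rho t}$ in $\mathcal{B}(\mathcal{E}_0)$ by Hypothesis~\ref{hypos}. The leading contribution then becomes a convolution of $\e^{-\rho(t-\tau)}$ against an input of size $C\delta^2\e^{-\omega\tau}$, and since $\omega<\rho$ this yields the desired $C\delta^2\e^{-\omega t}$ bound. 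The error term $\int_\tau^t S(t-s)B_q T_q(s-\tau)P_q^s F_q(y(\tau))\dd s$ is treated analogously, using $B_q\in\mathcal{B}(\mathcal{E}_\alpha,\mathcal{E}_0)$ from Remark~\ref{RB} together with \eqref{exp-stab}; the ordering $0<\omega<\rho<\nu$ from \eqref{eq4.7.4} makes the iterated convolution finite. The same reduction controls the initial-value piece $[T_q(t)P_q^s z_0]_2$ and the tail integral $\int_t^\infty[P_q^c F_q(y(\tau))]_2\dd\tau$, where the latter is handled via $P_q^c\in\mathcal{B}(\mathcal{E}_\alpha,\mathcal{E}_0)$.

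For the Lipschitz bound \eqref{LipPhiz} I will repeat the three estimates, replacing \eqref{eq4.6.1}--\eqref{eq4.6.2} by the Lipschitz versions \eqref{eq4.6.3}--\eqref{eq4.6.4}. The contribution $C|z_0-\bar{z}_0|_\beta$ comes only from $T_q(t)P_q^s(z_0-\bar{z}_0)$ via the uniform bounds on $T_q P_q^s$ in $\mathcal{E}_\alpha$ and on $\mathcal{E}_\beta$ established earlier. Each summand on the right of \eqref{eq4.6.3}--\eqref{eq4.6.4} factors into a difference (contributing $\|y-\bar{y}\|$, together with a factor $\e^{-\omega\tau}$ in the weighted cases) times a quantity bounded by $\delta$, which produces the coefficient $C\delta\|y-\bar{y}\|$ in \eqref{LipPhiz}. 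Choosing $\delta$ small so that $C\delta<1$ gives the strict contraction, and then choosing $\delta_0=\delta_0(\delta)$ small secures $\Phi_q(\cdot,z_0):\mathbb{B}_\delta(\|\cdot\|)\to\mathbb{B}_\delta(\|\cdot\|)$.
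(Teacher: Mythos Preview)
Your plan is correct and follows essentially the same architecture as the paper's proof: the $\|\cdot\|_{\omega,\alpha}$ bound via \eqref{exp-stab} and \eqref{eq4.6.2}, the crucial $v$-component bound via the identity $\pi_2 T_q(t-\tau)=S_2(t-\tau)\pi_2+\int_\tau^t S_2(t-s)\pi_2 B_q T_q(s-\tau)\dd s$ obtained from \eqref{eq4.5.1}--\eqref{eq4.5.2}, and the Lipschitz estimate by replacing \eqref{eq4.6.1}--\eqref{eq4.6.2} with \eqref{eq4.6.3}--\eqref{eq4.6.4}. The ordering $0<\omega<\rho<\nu$ is used exactly as you describe.

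There is one genuine, though minor, difference. For the unweighted bound $\|\Phi_q(y,z_0)\|_{0,0}$ you invoke Lemma~\ref{lemBT} to get $|T_q(t-\tau)P_q^s F_q(y(\tau))|_0\le C|F_q(y(\tau))|_\beta$ and then integrate the $O(\e^{-\omega\tau})$ decay of $|F_q(y(\tau))|_\beta$. The paper instead reuses the decomposition \eqref{eq4.5.1} also for the $\pi_1$-component, bounding $|\pi_1 T_q(t-\tau)P_q^s F_q|_0$ via $\|S(t-\tau)\|_{\cB(\cE_0)}$, $B_q\in\cB(\cE_\alpha,\cE_0)$, and \eqref{exp-stab}. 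Both routes work; yours is slightly shorter since the work hidden in Lemma~\ref{lemBT} has already been done, while the paper's route keeps the proof self-contained with respect to that lemma.
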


\begin{proof}
Let $t\ge0$, $\delta,\delta_0>0$, $z_0,\bar{z}_0\in \mathbb{B}_{\delta_0}(|\cdot|_{\beta})$, 
$y,\bar{y}\in \mathbb{B}_{\delta}(\|\cdot\|)$, and $|q|\leq q_0$. Below the constants are uniform 
for  $\delta$, $\delta_0$ and $q$ in bounded subsets.
By $\pi_1y=u$ and $\pi_2y=v$, we denote the projection of $y=(u,v)$ onto its first and second components.
Formulas \eqref{eq4.5.1} and \eqref{eq4.5.2} yield
\begin{equation}\label{eq4.8.1}
\pi_2T_q(t-\tau)=S_2(t-\tau)\pi_2+\int_{\tau}^{t}S_2(t-s)\pi_2B_qT_q(s-\tau)\dd s, \quad  0\le \tau\le t.
\end{equation}

1a) Using \eqref{eq4.8.1}, \eqref{eq4.7.5}, and Remarks~\ref{Projq} and \ref{RB}, the second component of 
the first integral in \eqref{eq4.7.1} can be estimated by
\begin{align}\label{eq4.8.2}
 \e^{\omega t}&\left|\pi_2\int_0^tT_q(t-\tau)P_q^{\mathrm{s}}F_q(y(\tau))\dd\tau\right|_0\\
&\leq C\e^{\omega t}\int_0^t\left( \e^{-\rho(t-\tau)}|F_q(y(\tau))|_\beta
  +\int_{\tau}^t\e^{-\rho(t-s)}\e^{-\nu(s-t)}|F_q(y(\tau))|_{\alpha}\dd s \right)\D\tau,  \notag
\end{align}
since
\begin{align}
&|S_2(t-\tau)\pi_2P_q^sF_q(y(\tau))|_0\le
\|S_2(t-\tau)\pi_2\|_{\mathcal{B}(\mathcal{E}_0)}\|P_q^s\|_{\mathcal{B}(\mathcal{E}_\beta,\mathcal{E}_0)}|F_q(y(\tau))|_\beta,\label{NES1}\\
&|S_2(t-s)\pi_2B_qT_q(s-\tau)P_q^sF_q(y(\tau))|_0\label{NES2}\\
&\qquad\qquad\le
\|S_2(t-s)\pi_2\|_{\mathcal{B}(\mathcal{E}_0)}
\|B_q\|_{\mathcal{B}(\mathcal{E}_\alpha,\mathcal{E}_0)}\|T_q(s-\tau)P_q^s\|_{\mathcal{B}(\mathcal{E}_\alpha)}|F_q(y(\tau))|_\alpha.\notag
\end{align}
Because of  \eqref{eq4.6.1} and \eqref{eq4.6.2}, the formulas \eqref{eq4.8.2} and \eqref{eq4.7.4} yield
\begin{align*}
\e^{\omega t}&\left| \pi_2\int_0^tT_q(t-\tau)P_q^{\mathrm{s}}F_q(y(\tau))\dd\tau \right|_0\\
&\leq  C\e^{\omega t}\int_0^t\bigg( \e^{-\rho (t-\tau)}\e^{-\omega \tau}\e^{\omega \tau}
             (|y(\tau)|_{\alpha}+|v(\tau)|_0)\,|y(\tau)|_0\\
& \qquad \qquad+\int_{\tau}^t\e^{-\rho(t-s)}\e^{-\nu(s-t)}\e^{-\omega \tau}\e^{\omega\tau}
    |y(\tau)|_{\alpha}\,|y(\tau)|_0\dd s \bigg)\D\tau\\
& \leq  C(\|y\|_{\omega,\alpha}+\|v\|_{\omega,0})\,\|y\|_{0,0}\int_0^t \e^{(\omega-\rho)(t-\tau)}\dd\tau\\
& \qquad  +C\|y\|_{\omega,\alpha}\,\|y\|_{0,0}\int_0^t\e^{\omega(t-\tau)}\left( \int_{\tau}^t\e^{-\rho(t-s)}
        \e^{-\nu(s-\tau)} \dd s\right)\D\tau\\
 &\leq  C\|y\|^2\leq  C\delta^2.
\end{align*}
We next employ  \eqref{eq4.7.5}, \eqref{eq4.6.2} and \eqref{eq4.7.4} to bound 
\begin{align*}
\e^{\omega t}\left| \int_0^tT_q(t-\tau)P_q^{\mathrm{s}}F_q(y(\tau))\dd\tau \right|_{\alpha}
& \leq C\int_0^t\e^{\omega t}\e^{-\nu(t-\tau)}\e^{-\omega \tau}
     \e^{\omega \tau}|y(\tau)|_0\,|y(\tau)|_{\alpha}\dd\tau \\
&\leq C\|y\|_{0,0}\,\|y\|_{\omega,\alpha} \leq C\delta^2.
\end{align*}
To finish with the first integral in \eqref{eq4.7.1}, it remains to control the $\left|\cdot\right|_0$ norm 
of its first component. Here \eqref{eq4.5.1}, \eqref{eq4.5.4}, Remark~\ref{Projq} (in particular, that $P_q^s\in\mathcal{B}(\mathcal{E}_\beta,\mathcal{E}_0)$), Remark~\ref{RB}, 
\eqref{eq4.6.1}, \eqref{eq4.7.5}, \eqref{eq4.6.2} and \eqref{eq4.7.4}  imply the  inequalities
\begin{align*}
\bigg|& \pi_1\int_0^tT_q(t-\tau)P_q^{\mathrm{s}}F_q(y(\tau))\dd\tau \bigg|_0\\
&= \bigg| \pi_1\!\!\int_0^t\! S(t-\tau)P_q^{\mathrm{s}}F_q(y(\tau))\dd\tau 
 + \pi_1\!\!\int_0^t\int^t_\tau \! S(t-s)B_qT_q(s-\tau)P_q^{\mathrm{s}}F_q(y(\tau))\dd s\dd\tau \bigg|_0\\
&\leq  C\int_0^t|F_q(y(\tau))|_\beta\dd\tau + C\|B_q\|_{\mathcal{B}(\mathcal{E}_{\alpha},\mathcal{E}_0)}
     \int_0^t\int^t_{\tau}|T_q(s-\tau)P_q^{\mathrm{s}}F_q(y(\tau))|_{\alpha}\dd s\dd\tau\\
&\leq C\!\!\int_0^t\!\e^{-\omega\tau}|y(\tau)|_0\,\e^{\omega\tau} (|y(\tau)|_{\alpha}+|v(\tau)|_0)\dd\tau 
     + C\!\!\int_0^t\!\int^t_{\tau}\!\e^{-\nu (s-\tau)}|y(\tau)|_0\,|y(\tau)|_{\alpha}\dd s\dd\tau\\
&\leq C\|y\|_{0,0}\,(\|y\|_{\omega,\alpha}+\|v\|_{\omega,0})\!\int_0^t \!\e^{-\omega \tau}\D\tau
  +C\|y\|_{0,0}\|y\|_{\omega,\alpha}\!\int_0^t\!\int^t_\tau\!\e^{-\nu (s-\tau)}\D s\,\e^{-\omega \tau}\dd\tau\\
&\leq  C\delta^2.
\end{align*}

1b) We now treat the term $T_q(t)P_q^{\mathrm{s}}z_0$ in \eqref{eq4.7.1}. From \eqref{eq4.7.5} 
and \eqref{eq4.7.4} we infer
\begin{equation*}
\e^{\omega t}|T_q(t)P_q^{\mathrm{s}}z_0|_{\alpha}\leq C\e^{\omega t}\e^{-\nu t}|z_0|_{\alpha}\leq C\delta_0.
\end{equation*}
By means of  \eqref{eq4.5.1}, \eqref{eq4.5.4}, Remark~\ref{Projq} (in particular, that 
$P_q^s\in\mathcal{B}(\mathcal{E}_\beta,\mathcal{E}_0)$) and Remark~\ref{RB}, as well as \eqref{eq4.7.5}, we next compute 
\begin{align*}
|\pi_1T_q(t)P_q^{\mathrm{s}}z_0|_0
&\leq |S(t)P_q^{\mathrm{s}}z_0|_0+\int_0^t|S(t-s)B_qT_q(s)P_q^{\mathrm{s}}z_0|_0\dd s\\
&\leq C|z_0|_\beta+C\int_0^t\|B_q\|_{\mathcal{B}(\mathcal{E}_{\alpha},\mathcal{E}_0)}
 \e^{-\nu s}|z_0|_{\alpha}\dd s \ \ 
\leq C\delta_0.
\end{align*}
Finally, formulas \eqref{eq4.8.1}, \eqref{eq4.7.5}, Remarks~\ref{Projq}  and \ref{RB}, as well as 
inequality \eqref{eq4.7.4} imply
\begin{align*}
\e^{\omega t}|\pi_2T_q(t)P_q^{\mathrm{s}}z_0|_0 
 &\leq  \e^{\omega t} |S_2(t)\pi_2P_q^{\mathrm{s}}z_0|_0
     +\int_0^t|S_2(t-s)\pi_2B_qT_q(s)P_q^{\mathrm{s}}z_0|_0\dd s\\
&\leq  C\e^{(\omega -\rho)t}|z_0|_\beta+C\int_0^t\e^{\omega t}\e^{-\rho(t-s)}\e^{-\nu s}|z_0|_{\alpha}\dd s\\
&\leq  C\delta_0.
\end{align*}

1c) To show the invariance, it remains to bound the norms of the last integral in \eqref{eq4.7.1}. 
Remark~\ref{Projq} (in particular, that $P_q^c\in\mathcal{B}(\mathcal{E}_\alpha,\mathcal{E}_\beta)$) and  estimate  \eqref{eq4.6.2} yield
\begin{align*}
\e^{\omega t}\left|\int_t^{\infty}P_q^{\mathrm{c}}F_q(y(\tau))\dd\tau\right|_{\beta}
&\leq C\int_t^{\infty}\e^{\omega t}|F_q(y(\tau))|_{\alpha}\dd\tau\\
&\leq C\int_t^{\infty}\e^{\omega t}\e^{-\omega \tau}\e^{\omega \tau}|y(\tau)|_0\,|y(\tau)|_{\alpha}\dd\tau\\
& \leq C\|y\|_{0,0}\,\|y\|_{\omega,\alpha} \leq C\delta^2.
\end{align*}
We thus have shown that $\Phi_q(\cdot,z_0)$ leaves the ball $\mathbb{B}_\delta(\|\cdot\|)$ invariant 
if first $\delta>0$ and then $\delta_0>0$ are chosen small enough.

2) For the contractivity we have to estimate the difference
\begin{align}\label{eq4.8.3}
\Phi_q(y,z_0)-\Phi_q(\bar{y},z_0)
&=\int_0^t T_q(t-\tau)P_q^{\mathrm{s}}\big( F_q(y(\tau))-F_q(\bar{y}(\tau)) \big)\dd\tau\\
& \qquad - \int_t^{\infty}P_q^{\mathrm{c}} \big(F_q(y(\tau))-F_q(\bar{y}(\tau)) \big)\dd\tau.\notag
\end{align}
2a) Using \eqref{eq4.5.1}, \eqref{eq4.5.4}, \eqref{eq4.7.5}, Remark~\ref{Projq} (in particular, that $P_q^s\in\mathcal{B}(\mathcal{E}_\beta,\mathcal{E}_0)$) and Remark~\ref{RB}, we 
bound the first integral by
\begin{align*} 
\Big|\int_0^t &T_q(t-\tau)P_q^{\mathrm{s}}\left( F_q(y(\tau))-F_q(\bar{y}(\tau))\right)\D\tau \Big|_0\notag\\
&\leq \left| \int_0^t S(t-\tau)P_q^{\mathrm{s}}\big( F_q(y(\tau))-F_q(\bar{y}(\tau)) \big) \dd\tau \right|_0
    \notag\\
&\qquad +\left|  \int_0^t\int_{\tau}^{t}S(t-s)B_qT_q(s-\tau)P_q^{\mathrm{s}}
    \big(   F_q(y(\tau))-F_q(\bar{y}(\tau))  \big) \dd s\dd\tau\right|_0\notag \\
& \leq C\int_0^t\left|  F_q(y(\tau))-F_q(\bar{y}(\tau))  \right|_\beta\dd\tau \notag\\
& \qquad +C \int_0^t\int_{\tau}^{t}\|B_q\|_{\mathcal{B}(\mathcal{E}_0,\mathcal{E}_{\alpha})} 
    \e^{-\nu(s-\tau)}\left|  F_q(y(\tau))-F_q(\bar{y}(\tau)) \right|_{\alpha} \dd s \dd\tau.\notag
\end{align*} 
The inequalities  \eqref{eq4.6.3} and \eqref{eq4.6.4} then lead to
\begin{align*}
\Big|& \int_0^t T_q(t-\tau)P_q^{\mathrm{s}}\left( F_q(y(\tau))-F_q(\bar{y}(\tau)) \right)\dd\tau \Big|_0\\
& \leq  C\int_0^t\e^{-\omega \tau} \Big[ |y(\tau)-\bar{y}(\tau)|_0\,\e^{\omega \tau} 
   (|y(\tau)|_{\alpha}+|\bar{y}(\tau)|_{\alpha}+|v(\tau)|_0) \\
 &\qquad\qquad   + |\bar{y}(\tau)|_0\,\e^{\omega \tau}|v(\tau)-\bar{v}(\tau)|_0  
 + \e^{\omega \tau}\, |y(\tau)-\bar{y}(\tau)|_\alpha\, (|y(\tau)|_0+|\bar{y}(\tau)|_0)\Big]\dd\tau \\
&\quad +C\int_0^t\e^{-\omega \tau}\e^{\omega \tau}|y(\tau)-\bar{y}(\tau)|_{\alpha}\,
    (|y(\tau)|_0 +|\bar{y}(\tau)|_0 ) \dd\tau\\
&\leq C\|y-\bar{y}\|_{0,0}\,\big( \|y\|_{\omega,\alpha}+  \|\bar{y}\|_{\omega,\alpha}+\|v\|_{\omega,0} \big)  
         +C\|\bar{y}\|_{0,0}\,\|v-\bar{v}\|_{\omega,0}\\
&\qquad +C \|y-\bar{y}\|_{\omega,\alpha}\,(\|y\|_{0,0}+ \|\bar{y}\|_{0,0})   \\
&\leq  C\delta\,\|y-\bar{y}\|.
\end{align*}
The $\left|\cdot\right|_{\alpha}$-norm of the first integral in \eqref{eq4.8.3} is estimated by
\begin{align*}
\e^{\omega t} &\left| \int_0^tT_q(t-\tau)P_q^{\mathrm{s}}(F_q(y(\tau)-F_q(\bar{y}(\tau)))
         \dd\tau \right|_{\alpha}\\
&\leq  C\int_0^t \e^{\omega t}\e^{-\nu (t-\tau)}\e^{-\omega \tau}\e^{\omega \tau}   
   |y(\tau)-\bar{y}(\tau)|_{\alpha} \,( |y(\tau)|_0 + |\bar{y}(\tau)|_0) \dd\tau \\
&\leq  C\int_0^t \e^{(\omega-\nu)(t-\tau)} \dd\tau \, \|y-\bar{y}\|_{\omega,\alpha}\,(\|y\|_{0,0}+\|\bar{y}\|_{0,0})\\
&\leq  C\delta \,\|y-\bar{y}\|,
\end{align*}
employing \eqref{eq4.7.5}, \eqref{eq4.6.4}, and \eqref{eq4.7.4}. As in \eqref{NES1} and \eqref{NES2}, for the second 
component we use
formulas \eqref{eq4.8.1} and \eqref{eq4.7.5}, Remark~\ref{Projq} (in particular, that $P_q^s\in\mathcal{B}(\mathcal{E}_\beta,\mathcal{E}_0)$) and Remark~\ref{RB}, 
as well as inequalities \eqref{eq4.6.3}, \eqref{eq4.6.4} and \eqref{eq4.7.4} to derive the estimates
\begin{align*}
&\e^{\omega t} \left|\pi_2 \int_0^tT_q(t-\tau)P_q^{\mathrm{s}}(F_q(y(\tau)-F_q(\bar{y}(\tau)))\dd\tau  
         \right|_{0}\\
&\leq  \e^{\omega t}\left| \int_0^tS_2(t-\tau)\pi_2P_q^{\mathrm{s}}(F_q(y(\tau)-F_q(\bar{y}(\tau)))\dd\tau   
     \right|_0\\
& \qquad +\e^{\omega t}\left| \int_0^t\int_{\tau}^{t}S_2(t-s)\pi_2B_qT_q(s-\tau)P_q^{\mathrm{s}}
           (F_q(y(\tau)-F_q(\bar{y}(\tau)))\dd s\dd\tau \right|_0\\
&\leq  C\int_0^t\e^{\omega (t-\tau)} \e^{-\rho (t-\tau)} \e^{\omega \tau} 
\Big[|y(\tau)-\bar{y}(\tau)|_{0}\,( |y(\tau)|_{\alpha}+ |\bar{y}(\tau)|_{\alpha}+|v(\tau)|_0) \\
 &\qquad  +|\bar{y}(\tau)|_0\,|v(\tau)-\bar{v}(\tau)|_0 
  + |y(\tau)-\bar{y}(\tau)|_{\alpha}\,(|y(\tau)|_0  + |\bar{y}(\tau)|_0)\Big]\dd\tau \\
&\ \   +\! C\!\!\int_0^t\!\!\int_{\tau}^{t}\!\!\e^{\omega t-\rho(t-s)} \|B_q\|_{\mathcal{B}(\cE_{\alpha},\cE_0)} 
 \e^{-\nu(s-\tau)-\omega \tau}\e^{\omega \tau} |y(\tau)-\bar{y}(\tau)|_{\alpha}(|y(\tau)|_0  + |\bar{y}(\tau)|_0)\D\tau\\
&\leq  C \Big( \|y-\bar{y}\|_{0,0}( \|y\|_{\omega,\alpha} + \|\bar{y}\|_{\omega,\alpha}+\|v\|_{\omega,0}) 
  + \|\bar{y}\|_{0,0}\,\|v-\bar{v}\|_{\omega,0} \\
&\qquad   +\|y-\bar{y}\|_{\omega,\alpha}\,(\|y\|_{0,0}+\|\bar{y}\|_{0,0}) \Big)\\
&\leq  C\delta \,\|y-\bar{y}\|.
\end{align*}
As a result, the $\|\cdot\|$-norm of the first integral in \eqref{eq4.8.3} is dominated by 
$C\delta\,\|y-\bar{y}\|$. 

2b) For the second integral in \eqref{eq4.8.3}, Remark~\ref{Projq} and  inequality  
 \eqref{eq4.6.4} yield
\begin{align*}
\e^{\omega t}&\Big|\int_t^{\infty} P_q^{\mathrm{c}}(F_q(y(\tau))-F_q(\bar{y}(\tau)))   
        \dd\tau\Big|_{\beta}\\
&\leq  C\int_t^{\infty}\e^{\omega(t-\tau)}\e^{\omega \tau} |y(\tau)-\bar{y}(\tau)|_{\alpha}\,  
  (|y(\tau)|_0+ |\bar{y}(\tau)|_0)\dd\tau \\ 
&\leq  C\|y-\bar{y}\|_{\omega,\alpha}(\|y\|_{0,0}+\|\bar{y}\|_{0,0})\\
&\leq  C\delta\,\|y-\bar{y}\|.
\end{align*}
 We have thus established 
\begin{equation}\label{est:phi-lip}
\|\Phi_q(y,z_0)-\Phi_q(\bar{y},z_0)\|\leq C\delta\,\|y-\bar{y}\|,
\end{equation}
finishing the proof of the first part of Lemma~\ref{Lproper}. 

3) It remains to show  the estimate
\begin{equation}  \label{LP2}
\|\Phi_q(y,z_0)-\Phi_q(y,\bar{z}_0)\|=\| T_q(\cdot)P_q^s(z_0-\bar{z}_0) \|\leq C|z-\bar{z}_0|_{\beta}.
\end{equation}
The inequalities \eqref{eq4.7.5} and \eqref{eq4.7.4} first imply that 
\[\|T_q(\cdot)P_q^s(z_0-\bar{z}_0)\|_{\omega,\alpha}\leq C|z-z_0|_{\alpha}.\]
 To treat the norm $\|\cdot\|_{0,0}$, from \eqref{eq4.5.1}, \eqref{eq4.5.4}, Remark~\ref{Projq} (in particular, that $P_q^s\in\mathcal{B}(\mathcal{E}_\beta,\mathcal{E}_0)$), Remark~\ref{RB} and \eqref{eq4.7.5}, we conclude
\begin{align*}
|T_q(t)P_q^s(z_0-\bar{z}_0)|_0
&\leq |S(t)P_q^s(z_0-\bar{z}_0)|_0 + \int_0^t|S(t-s)B_qT_q(s)P_q^s(z_0-\bar{z}_0)|_0\dd s\\
&\leq C|z_0-\bar{z}_0|_\beta+C\int_0^t\e^{-\nu s}\dd s\;|z_0-\bar{z}_0|_{\alpha}\\
&\le C|z_0-\bar{z}_0|_{\beta}.
\end{align*}
In a similar way, formula  \eqref{eq4.8.1}, Remarks~\ref{Projq} and \ref{RB}, as well as inequalities 
\eqref{eq4.7.5}  and \eqref{eq4.7.4} yield the remaining bound for  
$\|\pi_2 T_q(\cdot)P_q^s(z_0-\bar{z}_0)\|_{\omega,0}$ so that \eqref{LP2} follows.
\end{proof}

\section{Stable manifolds}\label{sec3}
For a small $q_0>0$ and each $q\in[- q_0,q_0]$, we now construct a function 
$\phi_q:\Ran(P_q^s)\rightarrow P_q^c$ whose graph contains $Y_q$ and it is a stable manifold $\mathcal{M}^s_q$ 
for the system \eqref{eq4.2.3}. We further prove that the sets $\mathcal{M}^s_q$ satisfy
the standard properties of  stable manifolds and that they foliate a small neighborbood of $Y_0$. 

Let $\delta,\delta_0>0$  be sufficiently small and $q_0>0$. Take $|q|\leq q_0$ and 
$z_0\in \Ran(P_q^s)\cap \mathbb{B}_{\delta_0}(|\cdot|_{\beta})$. Lemma~\ref{Lproper} then yields
a unique function $y_{z_0}^q:\mathbb{R}_+\rightarrow \mathcal{E}_\beta$   which belongs to 
$\mathbb{B}_{\delta}(\|\cdot\|)$ and is a fixed point of the Lyapunov-Perron operator $\Phi_q(\cdot,z_0)$; 
that is,
\begin{equation}
\label{LPFP}
y_{z_0}^q(t)=T_q(t)z_0+\int_0^t T_q(t-\tau)P_q^sF_q(y_{z_0}^q(\tau))\dd\tau
     -\int_t^{\infty}P_q^cF_q(y_{z_0}^q(\tau))\dd\tau
\end{equation}
for $t\geq 0$. At $t=0$ we obtain the identity
\begin{equation*}
y_{z_0}^q(0)=z_0-\int_0^{\infty}P_q^cF_q(y_{z_0}^q(\tau))\dd\tau
\end{equation*}
for all $z_0\in\Ran(P_q^s)\cap \mathbb{B}_{\delta_0}(|\cdot|_{\beta})$. We define the function 
$\phi_q:\Ran (P_q^s)\cap\mathbb{B}_{\delta_0}(|\cdot|_{\beta})\rightarrow \Ran(P_q^c)$ by
\begin{equation}
\label{defnphi}
\phi_q(z_0)=-\int_0^{\infty}P_q^cF_q(y_{z_0}^q(\tau))\dd\tau.
\end{equation}
In this notation, we have $y_{z_0}^q(0)=z_0+\phi_q(z_0)$ so that $y_{z_0}^q(0)$ 
belongs to  the graph $\text{\rm{graph}}_{\delta_0}\phi_q$ of $\phi_q$ over the small 
neighborhood  $\Ran(P_q^s) \cap \mathbb{B}_{\delta_0}(|\cdot|_{\beta})$ of 0. Adding and substracting  
the term $\int_0^t P_q^cF_q(y_{z_0}^q(\tau))\dd\tau$, we deduce from \eqref{LPFP} that the 
fixed point $y=y_{z_0}^q$ of the Lyapunov-Perron operator satisfies the equation
\begin{equation}
\label{VOCF}
y(t)=T_q(t)y(0)+\int_0^t T_q(t-\tau)F_q(y(\tau))\dd\tau,\qquad t\geq 0.
\end{equation}
Consequently,  $y=y_{z_0}^q$ is the mild solution of the nonlinear equation \eqref{eq4.3.3} in 
$\mathbb{B}_{\delta}(\|\cdot\|)$, and the function $Y_q+y$ solves \eqref{eq4.2.3} in the mild sense, 
cf.\ Remark~\ref{DL}. By uniqueness,  $y_0^q$ is the 0 function.
 Let also $\bar{z}_0$ belong to $\Ran(P_q^s)\cap\mathbb{B}_{\delta_0}(|\cdot|_{\beta})$. 
Taking a sufficiently small $\delta>0$ in \eqref{LipPhiz}, we deduce the estimates
\begin{equation}\label{y-lip}
\|y_{z_0}^q- y_{\bar{z}_0}^q\|\le C |z_0-\bar{z}_0|_\beta, \qquad \|y_{z_0}^q\|\le C|z_0|_\beta.
\end{equation}
For a number $\eta>0$ to be fixed below, the stable manifold $\mathcal{M}_q^s$ is then defined by
\begin{equation}
\label{DFNMQ}
\mathcal{M}_q^s=\{ Y_q+z_0+\phi_q(z_0):z_0\in \Ran(P_q^s)\cap \mathbb{B}_{\delta_0}(|\cdot|_{\beta}) \}
  \cap (Y_0 + \mathbb{B}_{\eta}(|\cdot|_{\beta})),
\end{equation}
where $|q|\le q_0$ and  $Y_0+\mathbb{B}_\eta(|\cdot|_{\beta})$ is the closed ball  in 
$\cE_\beta=\mathcal{E}_{\alpha}\cap\cE_0$ with radius $\eta$ and centered at the original 
traveling wave  $Y_0$.

\begin{theorem}
\label{STFOLTH}
Assume Hypotheses~\ref{HypSpL} and \ref{hypos}. Let $q_0, \delta, \delta_0,\eta>0$ be sufficiently small,
 $|q|\le q_0$, and $\omega>0$ be given by \eqref{eq4.7.4}. Then the ball 
 $Y_0+\mathbb{B}_{\eta}(|\cdot|_{\beta})$  is foliated by the stable manifolds $\mathcal{M}_q^s$ from
\eqref{DFNMQ} for the nonlinear equation \eqref{eq4.2.3} and the following assertions hold.
\begin{itemize}
\item[(i)] Each $\mathcal{M}_q^s$ is a Lipschitz manifold in $\cE_\beta$. If $Y(0)\in \mathcal{M}_q^s$
and the mild solution $Y(t; Y(0))$ of \eqref{eq4.2.3} belongs to $Y_0+\mathbb{B}_{\eta}(|\cdot|_{\beta})$
for some $t\ge0$, then $Y(t; Y(0))$ is contained in $\mathcal{M}_q^s$.
\item[(ii)] For each $Y(0)\in\mathcal{M}_q^s$ there exists a solution $Y(t;Y(0))$ of \eqref{eq4.2.3} 
 which exists for all $t\geq 0$ and satisfies $|Y(t;Y(0))-Y_q|_{\beta}\leq \delta$ as well as
 \begin{itemize}
   \item[(a)]  $|Y(t;Y(0))-Y_q|_{\alpha}\leq C\e^{-\omega t}\,|Y(0)-Y_q|_{\beta}$,
  \item[(b)] $|\pi_1(Y(t;Y(0))-Y_q)-U_q|_0\leq C\,|Y(0)-Y_q|_{\beta},$
  \item[(c)] $|\pi_2(Y(t;Y(0))-Y_q)-V_q|_0\leq C\e^{-\omega t}\,|Y(0)-Y_q|_{\beta}$
 \end{itemize}
 for all $t\ge0$. Here, $Y_q=(U_q, V_q)= Y_0(\cdot-q)$ is the shifted traveling wave,
 $\pi_1:Y=(U,V)\rightarrow U$, and $\pi_2:Y=(U,V)\rightarrow V$.
\item[(iii)] If $Y(t;Y(0))$, $t\geq 0$, is a mild solution of \eqref{eq4.2.3} with 
$Y(0)\in Y_0+\mathbb{B}_{\eta}(|\cdot|_{\beta})$ that satisfies properties 
 (a)--(c) in item (ii), then $Y(0)$ belongs to $\mathcal{M}_q^s$.
\item[(iv)] For $q\neq \bar{q}$, we have $\mathcal{M}_q^s\cap\mathcal{M}_{\bar{q}}^s=\emptyset$. Moreover,    
  $Y_0+\mathbb{B}_{\eta}(|\cdot|_{\beta})=\bigcup_{|q|\leq q_0}\mathcal{M}_q^s$.
  \item[(v)] The map $[-q_0,q_0]\to \Ran(P_q^c)$; $q\mapsto \phi_q(P_q^sz_0)$, is Lipschitz for each 
     $z_0\in\mathbb{B}_{\delta_0}(|\cdot|_{\beta})$.
\end{itemize}
As a result, for each $Y(0)\in Y_0+\mathbb{B}_{\eta}(|\cdot|_{\beta})$ there exists exactly one shift
$q\in[-q_0,q_0]$ such that $Y(0)\in\mathcal{M}_q^s$.
\end{theorem}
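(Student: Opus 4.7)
The plan is to dispatch items (i)--(iii), (v), and the disjointness half of (iv) with relatively short arguments drawn from the machinery already assembled in Sections~\ref{sec1}--\ref{sec2}, and then to spend the main effort on the covering and uniqueness-of-$q$ half of (iv), which also yields the concluding ``exactly one $q$'' statement and is the main obstacle.

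For (i), I would first derive the Lipschitz property of $\phi_q$ on $\Ran(P_q^s)\cap\mathbb{B}_{\delta_0}(|\cdot|_\beta)$ from the defining formula \eqref{defnphi}, the Lipschitz estimate \eqref{y-lip} on the fixed point $y_{z_0}^q$, the nonlinearity bound \eqref{eq4.6.4}, and the mapping property $P_q^c\in\cB(\cE_\alpha,\cE_\beta)$ from Remark~\ref{Projq}; the Lipschitz structure of $\mathcal{M}_q^s=Y_q+\mathrm{graph}(\phi_q)$ follows. For the forward-invariance clause, I fix $y=y_{z_0}^q$ and $t\ge 0$ with $Y(t)=Y_q+y(t)\in Y_0+\mathbb{B}_\eta(|\cdot|_\beta)$ and set $\tilde y(s):=y(s+t)$. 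The weighted norms in \eqref{eq4.7.3} only improve under forward shifts, so $\|\tilde y\|\le\|y\|\le\delta$; applying $P_q^c$ to \eqref{LPFP} at time $t$ (using $P_q^cT_q(t)z_0=0$ and $P_q^cT_q(t-\tau)P_q^s=0$) yields $P_q^cy(t)=-\int_t^\infty P_q^c F_q(y(\tau))\,\D\tau=-\int_0^\infty P_q^c F_q(\tilde y(\tau))\,\D\tau$, which is the defining condition identifying $\tilde y$ as $y_{\tilde z_0}^q$ with $\tilde z_0=P_q^sy(t)\in\Ran(P_q^s)\cap\mathbb{B}_{\delta_0}(|\cdot|_\beta)$ (the ball condition holds for $\eta,q_0$ small enough since $|\tilde z_0|_\beta\le C(\eta+q_0)$). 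Uniqueness in Lemma~\ref{Lproper} then places $Y(t)$ on $\mathcal{M}_q^s$.

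For (ii), any $Y(0)\in\mathcal{M}_q^s$ takes the form $Y_q+z_0+\phi_q(z_0)$ with $z_0=P_q^s(Y(0)-Y_q)$, and its mild solution is $Y(t)=Y_q+y_{z_0}^q(t)$ by \eqref{VOCF}; the bounds (a)--(c) are the three components of $\|y_{z_0}^q\|\le C|z_0|_\beta\le C|Y(0)-Y_q|_\beta$ from \eqref{y-lip}, read off against \eqref{eq4.7.3}. For (iii), given a mild solution $Y$ of \eqref{eq4.2.3} in $Y_0+\mathbb{B}_\eta(|\cdot|_\beta)$ satisfying (a)--(c), I set $y=Y-Y_q$ and apply $P_q^s$ and $P_q^c$ to the variation-of-constants identity. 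Using $T_q(t)P_q^c=P_q^c$ together with the decay $|P_q^cy(t)|_\beta\le C|y(t)|_\alpha\to 0$ from (a), and the absolute convergence of $\int_0^\infty P_q^cF_q(y(\tau))\,\D\tau$ via \eqref{eq4.6.2} and (a)--(c), I pass to $t\to\infty$ in the $P_q^c$-equation to obtain $P_q^cy(0)=-\int_0^\infty P_q^cF_q(y(\tau))\,\D\tau$. Combined with the $P_q^s$-equation, this shows $y$ is a fixed point of $\Phi_q(\cdot,P_q^sy(0))$ in $\mathbb{B}_\delta(\|\cdot\|)$; uniqueness then gives $y=y_{P_q^sy(0)}^q$, hence $Y(0)\in\mathcal{M}_q^s$.

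For (v), the Lyapunov--Perron operator $\Phi_q$ depends on $q$ only through $T_q$, $P_q^{s,c}$ and $F_q$, each Lipschitz in $q$ on the relevant spaces by \eqref{eq:proj-diff}, \eqref{diffL}, Remark~\ref{RB} and the smoothness of $q\mapsto Y_q$. A standard uniform-contraction argument applied to Lemma~\ref{Lproper} then yields the Lipschitz dependence of $y_{z_0}^q$ on $q$, which \eqref{defnphi} passes through to $q\mapsto\phi_q(P_q^sz_0)$. Disjointness in (iv) is immediate: a common point of $\mathcal{M}_q^s\cap\mathcal{M}_{\bar q}^s$ gives one mild solution $Y$ with $|Y(t)-Y_q|_\alpha$ and $|Y(t)-Y_{\bar q}|_\alpha$ both tending to $0$ by (a), forcing $|Y_q-Y_{\bar q}|_\alpha=0$, hence $q=\bar q$ since $Y_0$ connects distinct limits $Y_\pm$ and is therefore not periodic. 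The hard part is the covering and uniqueness of $q$: for each $Y(0)\in Y_0+\mathbb{B}_\eta(|\cdot|_\beta)$, applying $\zeta_q$ from \eqref{PRF} to the desired identity $Y(0)-Y_q=z_0+\phi_q(z_0)$ with $z_0=P_q^s(Y(0)-Y_q)$ reduces matters to solving the scalar equation
\begin{equation*}
h(q):=\zeta_q(Y(0)-Y_q)-\zeta_q\bigl(\phi_q(P_q^s(Y(0)-Y_q))\bigr)=0,\qquad q\in[-q_0,q_0].
\end{equation*}
By (v) and the Lipschitz dependence of $\zeta_q$ and $Y_q$ on $q$, $h$ is Lipschitz. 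The key leading-order computation is that $\partial_qY_q=-Y_q'$ and $\zeta_q(Y_q')=1$, so the $q$-derivative of $-\zeta_q(Y_q)$ is close to $1$, while the $\phi_q$-term contributes only an $O(\eta+q_0)$ Lipschitz perturbation (using $\phi_q(0)=0$ and the fact that $\phi_q$ has small Lipschitz constant on $\mathbb{B}_{\delta_0}$, traceable to the quadratic structure in \eqref{eq4.6.2} via \eqref{defnphi}). For sufficiently small $\eta,q_0$, a Lipschitz implicit-function / strict-monotonicity argument then produces a unique $q=q(Y(0))\in[-q_0,q_0]$ solving $h(q)=0$, giving both (iv) and the final ``exactly one $q$'' assertion.
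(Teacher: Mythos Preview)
Your treatment of (i)--(iii), (v), and the disjointness half of (iv) tracks the paper's proof closely; the paper packages part of the logic into the auxiliary Lemma~\ref{LEQ} (equivalence of ``$y_0\in\mathrm{graph}_{\delta_0}\phi_q$'', ``$y$ is the fixed point $y_{z_0}^q$'', and ``$y$ is a global mild solution in $\mathbb{B}_\delta(\|\cdot\|)$''), but your inline arguments recover exactly those implications.

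Where you genuinely diverge is the covering half of (iv). The paper does \emph{not} run a scalar implicit-function argument. Instead it invokes the orbital stability result Theorem~3.14 of \cite{GLS2}: for every $Y(0)\in Y_0+\mathbb{B}_\eta(|\cdot|_\beta)$ that theorem furnishes a shift $q=q(Y(0))$ such that the solution $Y(\cdot;Y(0))$ satisfies the decay properties (a)--(c), and then item~(iii) places $Y(0)$ in $\mathcal{M}_q^s$. So the paper's route is dynamical (convergence to a translate, proved elsewhere), whereas yours is static/geometric (solve $h(q)=0$ to put $Y(0)$ directly on a leaf).

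Your alternative is legitimate and has the advantage of being self-contained within the present paper: it does not appeal to the prior orbital-stability machinery of \cite{GLS2}. Two points to tighten if you carry it out. First, $\zeta_q$ is a functional on $\mathcal{E}_\alpha$, and $Y(0)$, $Y_q$ need not lie in $\mathcal{E}_\alpha$ individually, so the leading-order computation should be done on $\zeta_q(Y(0)-Y_q)$ as a whole (use $Y_q-Y_p=-\int_p^q Y_s'\,\D s$ and $\zeta_q(Y_q')=1$ to get $h(q)-h(p)=(q-p)+O((\eta+q_0)\,|q-p|)$), rather than splitting off $-\zeta_q(Y_q)$. Second, your sign-change argument requires $|h(0)|\le C\eta$ to be beaten by the slope over $[-q_0,q_0]$, i.e.\ $\eta$ must be chosen small \emph{after} $q_0$; this is consistent with the theorem's ``sufficiently small'' clause but should be stated explicitly. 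The paper's approach trades this bookkeeping for a citation, at the cost of depending on an external (and itself nontrivial) nonlinear stability theorem.
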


The following lemma will be used in the proof of Theorem~\ref{STFOLTH}. Recall
the definition of the ball $\mathbb{B}_{\delta}(\|\cdot\|)$ in \eqref{eq4.7.3}.
\begin{lemma}
\label{LEQ}
Assume Hypotheses~\ref{HypSpL} and \ref{hypos}. Let $\delta,\delta_0>0$ be chosen small enough, $q_0>0$, 
and let $|q|\leq q_0$. Take $y_0\in \cE_\beta=\mathcal{E}_{\alpha}\cap\mathcal{E}_0$. 
Let $y=Y(\cdot;y_0)\in C([0,t_0), \mathcal{E}_0\cap\mathcal{E}_{\alpha})$ be the mild solution of the 
nonlinear equation \eqref{eq4.3.3} with the initial value $y(0)=y_0$, where $t_0\in(0,\infty]$. Set 
$z_0=P_q^sy_0$ and assume that  $|z_0|_\beta\le \delta_0$. Then the following assertions are equivalent.
\begin{itemize}
\item[(a)] $y_0=z_0+\phi_q(z_0)\in\text{\rm{graph}}_{\delta_0}\phi_q$.
\item[(b)] $y$ can be extended to a global mild solution of \eqref{eq4.3.3} in
$\mathbb{B}_{\delta}(\|\cdot\|)$,  and it is the fixed point $y_{z_0}^q$ 
         of the Lyapunov-Perron operator $\Phi_q(\cdot,z_0)$ from \eqref{eq4.7.1}.
\item[(c)] $y$ can be extended to a global mild solution of \eqref{eq4.3.3} in
$\mathbb{B}_{\delta}(\|\cdot\|)$.
\end{itemize}
\end{lemma}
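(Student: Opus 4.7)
The argument is the standard equivalence between lying on the stable-manifold graph, being the Lyapunov-Perron fixed point, and existing as a global mild solution in the ball $\mathbb{B}_\delta(\|\cdot\|)$. My plan is to prove the cycle $(a)\Rightarrow(b)\Rightarrow(c)\Rightarrow(a)$.

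For $(a)\Rightarrow(b)$, Lemma~\ref{Lproper} supplies the Lyapunov-Perron fixed point $y_{z_0}^q\in\mathbb{B}_\delta(\|\cdot\|)$ satisfying \eqref{LPFP}, and as already observed in the derivation of \eqref{VOCF}, adding and subtracting $\int_0^tP_q^cF_q(y_{z_0}^q(\tau))\dd\tau$ turns \eqref{LPFP} into the Duhamel formula for \eqref{eq4.3.3} with initial value $y_{z_0}^q(0)=z_0+\phi_q(z_0)=y_0$. Local uniqueness of mild solutions in $\cE_\beta$ (Remark~\ref{rem:e-beta}) then forces the given solution $y$ on $[0,t_0)$ to coincide with $y_{z_0}^q|_{[0,t_0)}$ and thus to extend globally as $y_{z_0}^q$. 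The implication $(b)\Rightarrow(c)$ is immediate.

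The substantive direction is $(c)\Rightarrow(a)$. Given a global mild solution $y\in\mathbb{B}_\delta(\|\cdot\|)$ of \eqref{eq4.3.3}, I apply the spectral projections $P_q^s$ and $P_q^c$ to the Duhamel formula \eqref{VOCF}. Since $P_q^s$ commutes with $T_q(t)$, one obtains
\[
P_q^sy(t)=T_q(t)P_q^sy_0+\int_0^tT_q(t-\tau)P_q^sF_q(y(\tau))\dd\tau.
\]
Because $\Ran P_q^c=\Span\{Y_q'\}$ sits in the kernel of the generator, $T_q(t)P_q^c=P_q^c$, and
\[
P_q^cy(t)=P_q^cy_0+\int_0^tP_q^cF_q(y(\tau))\dd\tau.
\]
The exponential decay $|y(t)|_\alpha\le \delta\e^{-\omega t}$ built into the norm $\|\cdot\|$, combined with the bound \eqref{eq4.6.2} and the boundedness of $P_q^c\in\cB(\cE_\alpha,\cE_\beta)$ (Remark~\ref{Projq}), shows that $\int_0^\infty P_q^cF_q(y(\tau))\dd\tau$ converges absolutely in $\cE_\beta$ and that $|P_q^cy(t)|_\beta\le C\delta\e^{-\omega t}\to 0$. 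Passing to the limit $t\to\infty$ in the center equation yields
\[
P_q^cy_0=-\int_0^\infty P_q^cF_q(y(\tau))\dd\tau.
\]
Substituting this back into $y=P_q^sy+P_q^cy$ reproduces exactly the right-hand side of \eqref{eq4.7.1}, so $y$ is a fixed point of $\Phi_q(\cdot,z_0)$ in $\mathbb{B}_\delta(\|\cdot\|)$. Uniqueness in Lemma~\ref{Lproper} forces $y=y_{z_0}^q$, and evaluation at $t=0$ gives $y_0=z_0+\phi_q(z_0)\in\text{graph}_{\delta_0}\phi_q$.

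The main obstacle is the limit argument in the center component: one needs both absolute convergence of the improper integral and the vanishing of $P_q^cy(t)$, and both rely crucially on the exponential decay of $y$ in $\cE_\alpha$ encoded by $\|y\|_{\omega,\alpha}\le\delta$. This is precisely the role of the weighted sup-norm in singling out the stable-manifold fiber; the rest of the argument is bookkeeping with the commuting projections and the Lipschitz estimates of Lemma~\ref{mainest}.
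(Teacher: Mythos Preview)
Your proof is correct and follows essentially the same route as the paper's: the cycle $(a)\Rightarrow(b)\Rightarrow(c)\Rightarrow(a)$, with uniqueness of mild solutions for the first implication and a decomposition into stable and center parts for the last. The only cosmetic difference is that the paper packages the center obstruction as a constant $z_c=P_q^cy_0+\int_0^\infty P_q^cF_q(y(\tau))\dd\tau$ and kills it by letting $t\to\infty$ in the identity $y(t)=\Phi_q(y,z_0)(t)+z_c$, whereas you pass to the limit directly in the center equation $P_q^cy(t)=P_q^cy_0+\int_0^tP_q^cF_q(y(\tau))\dd\tau$ using $|P_q^cy(t)|_\beta\le C|y(t)|_\alpha\to0$; the two arguments are equivalent.
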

\begin{proof}
(a)$\Rightarrow$(b): Assertion (a) and the equations  \eqref{defnphi} and \eqref{LPFP} yield
\begin{equation*}
y_0=z_0+\phi_q(z_0)=z_0-\int_0^{\infty}P_q^cF_q(y_{z_0}^q(\tau))\dd\tau=y_{z_0}^q(0),
\end{equation*}
where $y_{z_0}^q\in \mathbb{B}_{\delta}(\|\cdot\|)$ is the fixed point of  $\Phi_q(\cdot,z_0)$.
 Since their initial values  are the same, the mild solutions 
 $y$ and $y_{z_0}^q$ coincide by uniqueness of  \eqref{VOCF}; i.e., (b) holds.

(b)$\Rightarrow$(c): This implication is obvious.  

(c)$\Rightarrow$(a): In view of (c), Lemma~\ref{Lproper} shows that the integral
\begin{equation*}
z_c:=P_q^cy_0+\int_0^{\infty}P_q^cF_q(y(\tau))\dd\tau\in \Ran(P_q^c).
\end{equation*}
exists. Since $y$ solves \eqref{VOCF} and $T_q(t-\tau)$ is the identity on $\Ran (P_q^c)$, we can write
\begin{align*}
y(t)&=T_q(t)y_0+\int_0^tT_q(t-\tau)F_q(y(\tau))\dd\tau\\
&=T_q(t)P_q^sy_0+\int_0^tT_q(t-\tau)P_q^sF(y(\tau))\dd\tau-\int_t^{\infty}P_q^cF(y(\tau))\dd\tau\\
&\qquad + P_q^cy_0+\int_t^{\infty}P_q^cF(y(\tau))\dd\tau + \int_0^t P_q^cF(y(\tau))\dd\tau,
\end{align*}  
using again Lemma~\ref{Lproper} and (c).
The definition of $\Phi_q(y,z_0)$ in \eqref{eq4.7.1} then yields
\begin{equation}
\label{EQFT}
y(t)=\Phi_q(y,z_0)(t)+z_c,\qquad t\geq 0.
\end{equation}
Due to (c) and \eqref{eq4.7.3}, the functions $y$ and $\Phi_q(y,z_0)$ tend to 0 in $\cE_\alpha$ 
as $t\rightarrow\infty$, and hence $z_c=0$. Equation \eqref{EQFT} thus implies  $y=\Phi_q(y,z_0)$ 
so that (a) is a consequence of the observations after 
\eqref{defnphi}.
\end{proof}

\begin{proof}[Proof of Theorem~\ref{STFOLTH}] Recall from Remark~\ref{DL} that all mild solutions of 
 \eqref{eq4.2.3} are given by $y+Y_q$ for a mild solution $y$ of \eqref{eq4.3.3}.
 
(i) and (ii). Equations \eqref{LPFP} and \eqref{defnphi} show that 
$z_0+\phi_q(z_0)$ is the value of $\Phi(z_0, \phi_q(z_0))$ at $t=0$. From \eqref{y-lip} we then deduce 
that $\phi_q$ and hence $\cM_q^s$ are Lipschitz in  $\cE_\beta=\cE_0\cap\cE_\alpha$.
 
 Let $y_0+Y_q$ belong to $\cM_q^s$, 
 where $z_0=P_q^sy_0\in \Ran(P_q^s)\cap\mathbb{B}_{\delta_0}(|\cdot|_\beta)$. 
By Lemma~\ref{LEQ}, the fixed point $y_{z_0}^q$ is the mild solution $Y(\cdot; y_0)$ of 
 \eqref{eq4.3.3} in $\mathbb{B}_{\delta}(\|\cdot\|)$ of 
 \eqref{eq4.3.3} with the initial value $y_0$.  Combined with \eqref{y-lip} and
 Remark~\ref{Projq}, these facts imply (ii).
 
 Take $t_0>0$ such that $|y(t_0)+Y_q-Y_0|_\beta\le \eta$.  It is easy to see that $y(t_0+\cdot)$
 still belongs to $\mathbb{B}_{\delta}(\|\cdot\|)$ and that it  is the mild solution of 
\eqref{eq4.3.3} with the initial value $y(t_0)$. Moreover, Remark~\ref{Projq} (in particular, that $P_q^s\in\mathcal{B}(\mathcal{E}_\beta)$) and \eqref{y-diff} yield 
\begin{equation}\label{est:delta0}
|P_q^s y(t_0)|_\beta \le  C\,(|y(t_0) + Y_q - Y_0|_\beta + |Y_0 - Y_q|_\beta) 
    \le C(\eta  + q) \le \delta_0,
\end{equation}    
if we choose $\eta>0$ and $q_0$ small enough. (Note that the constants are uniform for $q$ in compact 
intervals and independent of $\eta$.) Therefore, 
$y(t_0)+Y_q$ is contained in $\cM_q^s$ thanks to  Lemma~\ref{LEQ}. So (i) is shown.
 
(iii). Take $Y(0)\in Y_0+\mathbb{B}_{\eta}(|\cdot|_{\beta})$ that satisfies properties  (a)--(c) in item (ii). 
The function $y(t)=Y(t;Y(0))-Y_q$ is a mild solution of \eqref{eq4.3.3} with initial value $Y(0)-Y_q$. Using 
again  \eqref{y-diff}, we can estimate
 \[ |Y(0)-Y_q|_\beta\le |Y(0)-Y_0|_\beta+|Y_q-Y_0|_q\le \eta+ Cq.\]
 Possibly decreasing $\eta,q_0>0$, we deduce from conditions (a)--(c) the inequality  \eqref{eq4.7.3} for $y$ 
and from Remark~\ref{Projq} the estimate $|P_q^s (Y(0)-Y_q)|_\beta\le \delta_0 $.
Lemma~\ref{LEQ} now yields that $y(0)\in\text{\rm{graph}}_{\delta_0}\phi_q$, proving (iii). 

(iv). By Theorem~3.14  in \cite{GLS2}, we can fix a sufficiently small radius $\eta>0$ such that 
for each point $Y(0)$ in the ball $Y_0+\mathbb{B}_{\eta}(|\cdot|_{\beta};Y_0)$ 
there exists a shift $q=q(Y(0))$ such that the solution $Y(\cdot;Y(0))$ of \eqref{eq4.2.3} satisfies 
properties~(a)--(c) of item~(ii). We remark that in Theorem~3.14 we can choose the same number $\delta>0$ 
as in the current proof and exponents\footnote{In (7) of Theorem~3.14 there is a misprint, 
one has to replace $\nu$ by $\rho$.}  
$\nu,\rho>\omega$ which are different from our exponents $\nu$ and $\rho$ in \eqref{eq4.7.4}. 
Item (iii) then implies  that $Y(0)$ is contained in $\mathcal{M}_q^s$. If $Y(0)$ is also an element
of  $\mathcal{M}_{\bar{q}}^s$ for some $\bar{q}\in [-q_0,q_0]$, then the corresponding
solution $y$ would converge both to $Y_q$ and $Y_{\bar{q}}$ as $t\to\infty$, and so $q=\bar{q}$.
Hence, (iv) holds.

(v). Let $|q|,|\bar{q}|\le q_0$ and  $z_0\in\mathbb{B}_{\delta_0}(|\cdot|_{\beta})$. The maps 
$q\mapsto P_q^c\in\cB(\cE_\kappa,\cE_\beta)$, $q\mapsto P_q^s\in\cB(\cE_\kappa)$  and 
$q\mapsto B_q\in \cB(\cE_\alpha,\cE_0)$ are Lipschitz for $\kappa\in\{\beta,\alpha\}$ due to  
\eqref{eq:proj-diff} and Remark~\ref{RB}. Lemma~3.7 of \cite{GLS2} implies that $\gamma_\alpha Y_0'$ and 
 $\gamma_\alpha^{-1} Y_0'$ are bounded. Using \eqref{eq4.3.4} and \eqref{y-diff}, we then deduce the estimate
\[ |F_q(Y)-F_{\bar{q}}(Y)|_\beta\le C |Y|_{\kappa} \,|q-\bar{q}|   \]
for all $Y\in\cE_\kappa$ and $\kappa\in \{0,\alpha\}$.
In view of \eqref{defnphi}, for (v) it remains to check that
the map $q\mapsto y_{z_0}^q=:y_q$ is Lipschitz for $\|\cdot\|$. Since $y_q$ is the fixed point, we 
infer from \eqref{eq4.7.1} the identity
\[ y_q-y_{\bar{q}} = \Phi_q(y_q,z_0)-  \Phi_{\bar{q}}(y_q,z_0) 
    + \Phi_{\bar{q}}(y_q,z_0) -  \Phi_{\bar{q}}(y_{\bar{q}}, z_0).\]
By \eqref{est:phi-lip}, the second difference on the right hand side is bounded by
$C\delta\,\|y_q-y_{\bar{q}}\|$ and can thus be absorbed by the left hand side possibly
after decreasing $\delta>0$ once more. To control the other difference, we note that 
the bounded perturbation theorem and \eqref{diffL}  imply that
$q\mapsto T_q(t)\in\cB(\cE_\kappa)$ is Lipschitz for $\kappa\in \{0,\alpha\}$ and uniformly for $t\ge0$ in 
compact sets, see Corollary~3.1.3 of \cite{pazy}. To extend this property to $\RR_+$, let $t\in(n,n+1]$. We write
\begin{align*}
T_q(t)P_q^s - T_{\bar{q}}(t)P_{\bar{q}}^s
& = (T_q(t-n) - T_{\bar{q}}(t-n)) T_q(n)  P_q^s+ T_{\bar{q}}(t-n) T_q(n) P_q^s(P_q^s-\! P_{\bar{q}}^s) \\
 &  \quad + T_{\bar{q}}(t-n) \sum_{k=0}^{n-1} T_q(n-k-1) P_q^s (T_q(1) - T_{\bar{q}}(1)) 
    T_{\bar{q}}(k)P_{\bar{q}}^s\\
 & \quad  + T_{\bar{q}}(t-n) (P_q^s- P_{\bar{q}}^s) T_{\bar{q}}(n)P_{\bar{q}}^s.
\end{align*}
In the exponential decay estimate \eqref{exp-stab}  for $T_q(t)P_q^s $ we can replace $\nu$ by a 
slightly larger number, see Lemma~3.13 of \cite{GLS2}. This and the above mentioned facts lead to 
the inequality
\[ \|T_q(t)P_q^s - T_{\bar{q}}(t)P_{\bar{q}}^s\|_{\cB(\cE_\alpha)} \le C\e^{-\nu t}\,|q-\bar{q}|, 
   \qquad t\ge0.\]
As in Lemma~\ref{Lproper} one can now show that 
\[\|\Phi_q(z_0,y_q)-  \Phi_{\bar{q}}(z_0,y_q) \|\le C|q-\bar{q}|.\]
Summing up,  (v) is true.
\end{proof}

To conclude, we briefly mention two motivating examples borrowed from \cite{GLS3} that fit our setting. More details can be 
found in the papers
\cite{GLS1} and \cite{GLS2}, respectively. We stress, however,  that for this type of examples the Hypotheses~\ref{HRF}, 
\ref{HRFR} and \ref{HypSpL} (a) can  rigorously be verified not in all cases while the absence of the unstable eigenvalues 
required in Hypothesis \ref{HypSpL} (b) is usually checked only numerically for certain ranges of the parameter values.

\begin{example}\label{exa1} {\em Gasless combustion.}
A simple combustion model in one space dimension has been mentioned in the Introduction and is given by the system
\[
\partial_t u=\partial_{xx}u+v g(u), \quad
\partial_t v=-\beta v g(u), \]
where $ g(u) = e^{-\frac{1}{u}}$  if $u>0$ and $ g(u)=0$  if $u\le 0$.
In this system,  $u$ is the temperature, $v$ is the concentration of unburned fuel, $g$ is the unit reaction rate, 
and  $\beta>0$ is a constant parameter.  This system was a primary guiding example in \cite{G_indiana,GLS,GLS1,GLS2,GLS3}. 
One motivation for looking at this well-studied problem, in which the reactant does not diffuse, was heat-enhanced methods 
of oil recovery in which the reactant is coke contained in the rock formation, see \cite{akkutlu-yorsos03}.  
The value $u=0$ represents the ignition temperature and is also taken to be the background temperature, at which the 
reaction does not take place. 

 Clearly, Hypothesis~\ref{HRFR} is satisfied here. 
One looks for traveling waves $Y_0=(u_0,v_0)$ such that $Y_-=(u_{-},0)$ with $u_{-}>0$, 
$Y_+=(0,1)$, and $(u_0(x),v_0(x))$ approaches these end states exponentially as $x\to\pm\infty$. For each 
$\beta>0$ there is a unique $c>0$ for which such a wave exists, cf.\ \cite[\S 3.2] {GLS3}. This wave represents 
a combustion front that leaves behind 
of it high temperature $u_-=1/\beta$ and no fuel, while in front of it temperature is $0$ and there is fuel, with  concentration normalized to $1$.  As discussed in Paragraph~3.2 of \cite{GLS3}, Hypothesis~\ref{hypos} is true
and Hypothesis~\ref{HypSpL} can be verified (partly numerically) for small $\beta>0$.

We note the lack of diffusion in the second equation which inspired the  linear  Lemma~3.13 in \cite{GLS2} used to derive 
the exponential decay \eqref{exp-stab} from the spectral assumptions in Hypothesis \eqref{HypSpL}, and the form of 
the  nonlinear term in this and related problems which inspired the triangular and product structure of the nonlinearity 
in the current paper that follows from Hypothesis \ref{HRFR}.
\end{example}

\begin{example}\label{exa2}{\em Exothermic-endothermic chemical reactions.}
A model in which two chemical reactions occur at rates determined by temperature  was studied in \cite{SMS,SKMS2}, see 
also \cite{GLS2}.  One reaction is exothermic (produces heat), the other is endothermic (absorbs heat). The system reads
\begin{align}
\partial_t y_1    &=   \partial_{xx}y_1 +  y_2f_2(y_1) -  \sigma y_3f_3(y_1),  \label{simon1} \\
\partial_t y_2    &=   d_2\partial_{xx}y_2  -  y_2f_2(y_1),  \label{simon2} \\
\partial_t y_3    &=   d_3 \partial_{xx}y_3 -   \tau y_3f_3(y_1).   \label{simon3}   
\end{align}
Here $y_1$ is the temperature, $y_2$ is the quantity of an exothermic reactant, and $y_3$ is the quantity of an endothermic 
reactant.  The parameters  $\sigma$ and $\tau$ are positive, and there are positive constants $a_i$ and $b_i$ such that 
$f_i(u)=
a_ie^{-\frac{b_i}{u}}$ for $u>0$ and 
$f_i(u)=0$ for $u \le 0$.
In \cite{SMS,SKMS2} it is shown numerically that in certain parameter regimes there exist
 traveling wave solutions $Y_0$ of \eqref{simon1}--\eqref{simon3} with speed $c>0$ and the end states 
 $Y_-=(1-\frac{\sigma}{\tau},0,0)$  and $Y_+=(0,1,1)$.  Moreover,  both end states are approached at an exponential 
 rate, the zero eigenvalue of the linearization is simple, and there are no other eigenvalues in the right half plane.
A rigorous motivation for the existence of such traveling wave is also given in \cite[Section 9.2]{GLS2}. Assuming the 
existence of the traveling wave with these properties, the remaining hypotheses of the current paper are easy to verify.
\end{example}

\end{document}